\pdfoutput=1
\documentclass[11pt]{article}

\usepackage[utf8]{inputenc}
\usepackage[pdftex]{graphicx,xcolor}
\usepackage{array,fullpage,enumitem,microtype}
\usepackage[leqno]{amsmath}
\usepackage{amssymb,amsthm,mathtools,bm,stmaryrd,tikz-cd,mathdots}
\usepackage[pdftex,colorlinks=true,allcolors=blue,unicode,bookmarksnumbered,psdextra]{hyperref}
\usepackage{bookmark,tocbibind}
\usepackage[capitalize,nosort]{cleveref}

\usepackage{mymacros}

\let\defn\textbf

\newcommand*\Comgr{\mathrm{Comgr}}
\DeclareMathOperator\Ext{Ext}

\begin{document}

\title{On the Pettis--Johnstone theorem for localic groups}
\author{Ruiyuan Chen}
\date{}
\maketitle

\begin{abstract}
We explain how Johnstone's 1989 proof of the closed subgroup theorem for localic groups can be viewed as a point-free version of Pettis's theorem for Baire topological groups.
We then use it to derive localic versions of the open mapping theorem and automatic continuity of Borel homomorphisms, as well as the non-existence of binary coproducts of complete Boolean algebras.
\end{abstract}

\section{Introduction}

Pettis's theorem is a fundamental result for topological groups, which states, in its usual formulation, that the difference set $A \cdot A^{-1}$ of a non-meager set $A$ with the property of Baire is a neighborhood of the identity.
Many other important applications of Baire category to topological groups and vector spaces are easy consequences of Pettis's theorem, such as the open mapping theorem, automatic continuity, and closedness of embedded subgroups.
For a standard reference, see \cite[\S2.3]{Gidst}.

A locale is a ``formal'' space without an underlying set of points, consisting only of an abstract lattice of ``open sets''.
A motivating theme throughout locale theory is that forgetting about points results in a less pathological counterpart of topological spaces, and this is well-illustrated by the ``Baire category theorem'' for locales, due to Isbell (see e.g., \cite[C1.2.6]{Jeleph}): the intersection of \emph{all} dense open sets in a locale forms a dense sublocale.
The central role of Baire category in topological group theory then suggests analogs of various classical results, such as the aforementioned, for localic groups.
One such analog is given by the ``closed subgroup theorem'' of Isbell--Kříž--Pultr--Rosický \cite{IKPRgrp}, which states that every localic subgroup of a localic group is closed.
While the original proof of the closed subgroup theorem was fairly hands-on lattice-theoretic, Johnstone \cite{Jgpd} gave a more conceptual proof that essentially amounts to a point-free version of Pettis's theorem.
However, to our knowledge, this connection does not appear to be very widely known or used.

In this note, after reviewing some background in \cref{sec:loc}, we give in \cref{sec:pettis} an exposition of Johnstone's proof that makes its analogy to Pettis's theorem completely transparent.
We then derive several consequences of this localic version of Pettis's theorem, which we call the ``Pettis--Johnstone theorem''.
In \cref{sec:homom}, we derive consequences concerning the structure of localic group homomorphisms, including localic versions of the aforementioned results: the open mapping theorem, and automatic continuity of ``$\infty$-Borel group homomorphisms''.
In \cref{sec:cbool}, we derive a purely algebraic consequence having \emph{a priori} nothing to do with localic groups: we show that the coproduct of two (set-sized) complete Boolean algebras may not exist, in that it may be a proper class.

\paragraph*{Acknowledgments}

Research partially supported by NSF grant DMS-2054508.

\section{Preliminaries}
\label{sec:loc}

For standard background on locales, see \cite{Jstone}, \cite[C1.1--2]{Jeleph}, or \cite{PPloc}.
We denote the category of locales by $\!{Loc}$.
In this paper, we are not concerned with matters of constructivity, and work throughout in classical logic (unlike \cite{Jgpd}).

We follow the convention (used in e.g., \cite{Jeleph}) that the ``algebraic'' and ``geometric'' views of locales are to be strictly distinguished, both terminologically and notationally.
Thus, we speak of locales $X$ versus their frames of open sets $\@O(X)$, and continuous maps $f : X -> Y$ versus frame homomorphisms $f^* : \@O(Y) -> \@O(X)$.
We are willing to call elements of $\@O(X)$ \emph{open sets} $U \subseteq X$ (rather than terms like \emph{open parts} or \emph{opens} favored by some authors), because we will later be dealing with more general types of ``sets'', which would be rather awkward to similarly rename; the clear distinction between $X$ versus $\@O(X)$ should remove any potential confusion in the word ``set''.
Likewise, we freely use set-theoretic notation in place of lattice-theoretic ones, e.g.,
\begin{align*}
U \subseteq V  &\coloniff  U \le V, &
U \cap V &:= U \wedge V, &
X &:= \top, &
\emptyset &:= \bot.
\end{align*}
We sometimes speak of ``elements'' $x \in X$ of a locale; this will always mean points, i.e., continuous maps $x : \*1 -> X$, as opposed to open sets $U \in \@O(X)$.

A \defn{product locale} $X \times Y$ is given by a coproduct frame $\@O(X \times Y) := \@O(X) \otimes \@O(Y)$; we denote the ``open rectangles'' by $U \times V \in \@O(X \times Y)$ for $U \in \@O(X)$ and $V \in \@O(Y)$ (rather than the more common $U \otimes V$).
A \defn{sublocale} $Y \subseteq X$ is given by a quotient frame $\@O(X) ->> \@O(Y)$.
As the $\subseteq$ notation suggests, we identify open sets $U \in \@O(X)$ with open sublocales; this entails that we identify a further open $V \in \@O(U)$ with the same $V \subseteq U \in \@O(X)$, i.e., we identify $\@O(U)$ with $\down U \subseteq \@O(X)$ (rather than the fixed set of the corresponding nucleus, as in \cite{Jstone}).

We will need the notion of \defn{open map} $f : X -> Y$ between locales; see \cite[Ch.~V]{JTgpd} or \cite[C3.1]{Jeleph}.
A quick definition is that $f^* : \@O(Y) -> \@O(X)$ has a left adjoint, which we denote simply by $U |-> f(U)$, and this left adjoint is pullback-stable, in the sense that for any pullback square
\begin{equation*}
\begin{tikzcd}
Z \times_Y X \dar["f'"'] \rar["g'"] & X \dar["f"] \\
Z \rar["g"'] & Y
\end{tikzcd}
\end{equation*}
we have $f'(g^{\prime*}(U)) = g^*(f(U))$ (which implies, more generally, $f'(f^{\prime*}(V) \cap g^{\prime*}(U)) = V \cap g^*(f(U))$).
A basic example is given by a product projection $\pi_1 : X \times Y -> X$ (being the pullback of $Y -> \*1$).

Regarding more general \defn{images} of locale maps $f : X -> Y$, the unadorned term usually refers in the literature (see e.g., \cite[discussion before C1.2.5]{Jeleph}) to epi--regular mono factorization in $\!{Loc}$, i.e., the smallest sublocale of $Y$ through which $f$ factors.
Likewise, $f$ is usually called \defn{surjective} if it is an epimorphism in $\!{Loc}$.
\emph{However, these notions are too weak for our purposes, and we will only ever use their pullback-stable versions}, which we will usually name explicitly for emphasis.
(See \cref{thm:epi-pullback} below for a more ``intrinsic'' characterization of pullback-stable images.)

A sublocale $A \subseteq X$ is \defn{dense}, or more generally, a continuous map $f : A -> X$ has \defn{dense image}, if every nonempty open set in $X$ has nonempty pullback to $A$.
As mentioned in the Introduction, we have the following ``Baire category theorem'' for locales; see \cite[II~2.4]{Jstone}, \cite[C1.2.6(c)]{Jeleph}:

\begin{theorem}[Isbell]
\label{thm:bct}
In any locale $X$, the intersection of all dense sublocales is dense, i.e., there is a smallest dense sublocale (which also happens to be the intersection of all dense open sublocales).
\end{theorem}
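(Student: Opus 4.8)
The plan is to pin down the smallest dense sublocale explicitly as the \emph{double-negation} sublocale $D_{\neg\neg}$, whose frame is the Booleanization of $\@O(X)$. I will use the standard order-reversing correspondence between sublocales of $X$ and nuclei on $\@O(X)$ (see, e.g., \cite[C1.1--2]{Jeleph}) --- monotone, inflationary, idempotent maps $j\colon\@O(X)\to\@O(X)$ preserving binary intersections --- writing $A_j$ for the sublocale attached to $j$, so that $A_j\subseteq A_k$ iff $j\ge k$ pointwise, and intersections of sublocales correspond to joins of nuclei. A standard computation gives that $A_j$ is dense iff $j(\emptyset)=\emptyset$: if $j(\emptyset)=\emptyset$ then $j(U)=\emptyset$ forces $U\subseteq j(U)=\emptyset$, and conversely applying density of $A_j$ to the open $j(\emptyset)$, which satisfies $j(j(\emptyset))=j(\emptyset)$, forces $j(\emptyset)=\emptyset$. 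So it suffices to exhibit the \emph{largest} dense nucleus. As $\@O(X)$ is a complete Heyting algebra, every $U$ has a pseudocomplement $U^*:=\bigvee\{W:W\cap U=\emptyset\}=(U\Rightarrow\emptyset)$, characterised by $A\subseteq U^*\iff A\cap U=\emptyset$; I claim $j_{\neg\neg}\colon U\mapsto U^{**}$ is the desired nucleus.

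First I would verify that $j_{\neg\neg}$ is a nucleus. Monotonicity and inflationarity ($U\subseteq U^{**}$, from $U\cap U^*=\emptyset$) are immediate, and idempotence follows from the identity $U^{***}=U^*$ valid in any Heyting algebra, which exhibits each $U^{**}$ as a fixed point of $(-)^{**}$. The one substantive point --- and the step I expect to be the main obstacle --- is preservation of binary intersections, $(U\cap V)^{**}=U^{**}\cap V^{**}$. The inclusion $\subseteq$ is monotonicity; for $\supseteq$ it suffices to check $U^{**}\cap V^{**}\cap(U\cap V)^*=\emptyset$, which, writing $W:=(U\cap V)^*$, comes down to the chain: $W\cap U\cap V=\emptyset$ gives $W\cap U\subseteq V^*$, hence $W\cap U\cap V^{**}=\emptyset$, hence $W\cap V^{**}\subseteq U^*$, hence $W\cap V^{**}\cap U^{**}=\emptyset$, each step invoking only the characterisation of $(-)^*$.

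Granting this, $j_{\neg\neg}$ is dense since $\emptyset^{**}=\emptyset$ (as $\emptyset^*=X$ and $X^*=\emptyset$), and it is the largest dense nucleus: for any nucleus $k$ with $k(\emptyset)=\emptyset$ we have $k(U)\cap k(U^*)=k(U\cap U^*)=k(\emptyset)=\emptyset$, so $k(U)\subseteq(k(U^*))^*\subseteq(U^*)^*=U^{**}$, using $U^*\subseteq k(U^*)$ and that $(-)^*$ reverses order; thus $k\le j_{\neg\neg}$ pointwise, i.e.\ $D_{\neg\neg}:=A_{j_{\neg\neg}}\subseteq A_k$. Hence $D_{\neg\neg}$ is dense and contained in every dense sublocale, so it is the smallest dense sublocale, and since $D_{\neg\neg}$ is itself dense the intersection of all dense sublocales equals $D_{\neg\neg}$. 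For the parenthetical I would observe that each open sublocale $U\vee U^*$ is dense, since $(U\vee U^*)^*=U^*\cap U^{**}=\emptyset$, so $D_{\neg\neg}$ lies in the intersection $E$ of all dense open sublocales; conversely $E\subseteq\bigcap_U(U\vee U^*)$, whose nucleus is $\bigvee_U j_{U\vee U^*}$, and since the nucleus of the open sublocale $U\vee U^*$ sends $V$ to $(U\vee U^*)\Rightarrow V$, taking $U:=V$ yields $j_{V\vee V^*}(V)=V^*\Rightarrow V\supseteq V^{**}$ (as $V^{**}\cap V^*=\emptyset\subseteq V$), whence $\bigvee_U j_{U\vee U^*}\ge j_{\neg\neg}$ pointwise and $\bigcap_U(U\vee U^*)\subseteq D_{\neg\neg}$. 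Therefore $E=D_{\neg\neg}$.
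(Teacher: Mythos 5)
Your proof is correct and is exactly the standard argument behind the result the paper cites (Isbell, via \cite[II~2.4]{Jstone} and \cite[C1.2.6]{Jeleph}): the smallest dense sublocale is the double-negation sublocale, whose frame is the regular open algebra --- precisely the identification of $\@O(\Comgr(X))$ the paper itself invokes later in \cref{sec:dst}. All the steps (the density criterion $j(\emptyset)=\emptyset$, the verification that $(-)^{**}$ is a nucleus, its maximality among dense nuclei, and the reduction of the parenthetical to the dense opens $U\vee U^*$) check out.
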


We will say that a sublocale $A \subseteq X$ is \defn{fiberwise dense}, or more generally a locale map $f : A -> X$ has \defn{fiberwise dense image}, over another map $p : X -> Y$ if it is dense, and remains so after pulling back along any $Z -> Y$.
Thus, $A \subseteq X$ is dense iff it is fiberwise dense over $\*1$, i.e., for any $Z$, $A \times Z \subseteq X \times Z$ is dense.
Again, this definition is the pullback-stable strengthening of Johnstone's in \cite{Jgpd}, but is equivalent to it in the case of an open $p$, by \cite[2.7]{Jgpd}.
(See \cref{thm:dense-pullback} below for an ``intrinsic'' justification of our definition.)
The above ``Baire category theorem'' immediately implies its fiberwise generalization:

\begin{corollary}
Fiberwise dense sublocales (over a fixed base) are closed under arbitrary intersection.
\end{corollary}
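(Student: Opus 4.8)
The plan is to reduce everything to \cref{thm:bct} applied fiberwise, using that ``intersection of sublocales'' and ``pullback along a map into the base'' are mutually compatible. Fix the base map $p : X -> Y$, a family $(A_i)_{i \in I}$ of sublocales of $X$ that are each fiberwise dense over $p$, and set $A := \bigcap_i A_i$. By definition, to show $A$ is fiberwise dense over $p$ it suffices to check that for every locale map $g : Z -> Y$ the pullback $A \times_Y Z$ is dense in $X \times_Y Z$; this already includes the case $g = \operatorname{id}_Y$, which asks exactly that $A$ be dense in $X$, and nothing beyond it is needed, since a further pullback of $A \times_Y Z$ along a map $Z' -> Z$ is again of the form $A \times_Y Z'$ by pasting pullback squares.

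The one slightly non-formal ingredient I would invoke is that taking inverse images of sublocales along an arbitrary locale map preserves all intersections. Letting $g' : X \times_Y Z -> X$ denote the projection (the pullback of $g$ along $p$), and noting that $A_i \times_Y Z = (g')^{-1}(A_i)$ and likewise $A \times_Y Z = (g')^{-1}(A)$, this gives the identity
\begin{equation*}
A \times_Y Z \;=\; (g')^{-1}\Bigl(\,\bigcap_i A_i\Bigr) \;=\; \bigcap_i (g')^{-1}(A_i) \;=\; \bigcap_i \bigl(A_i \times_Y Z\bigr)
\end{equation*}
of sublocales of $W := X \times_Y Z$. This is standard (see e.g.\ \cite[C1.1--2]{Jeleph}): $(g')^{-1}$ is the restriction to sublocale lattices of the pullback functor $\!{Loc}/X -> \!{Loc}/W$, which preserves all limits as a right adjoint, and monos are stable under pullback in $\!{Loc}$.

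By fiberwise density of each $A_i$ over $p$, every $A_i \times_Y Z$ is a \emph{dense} sublocale of $W$. By \cref{thm:bct}, $W$ has a smallest dense sublocale $D$; since $D \subseteq A_i \times_Y Z$ for all $i$, the displayed identity gives $D \subseteq A \times_Y Z$. Finally, any sublocale of $W$ containing the dense sublocale $D$ is itself dense, because a nonempty open of $W$ restricts to a nonempty open of $D$, hence to a nonempty open of any intermediate sublocale (the restriction of $\emptyset$ being $\emptyset$). Thus $A \times_Y Z$ is dense in $W$, and as $g$ was arbitrary, $A$ is fiberwise dense over $p$.

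I do not expect a genuine obstacle here: the proof is bookkeeping on top of \cref{thm:bct}, and the only point requiring care is the commutation of sublocale inverse images with intersections, which is itself immediate from $\!{Loc}$ being finitely complete with pullback-stable monos. (As a sanity check, running the argument with $g = \operatorname{id}_Y$ throughout recovers the non-fiberwise fact that dense sublocales of a fixed locale are closed under arbitrary intersection, i.e.\ the parenthetical assertion in \cref{thm:bct}.)
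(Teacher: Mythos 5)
Your argument is correct and is exactly the "immediate" deduction the paper has in mind (the paper offers no proof beyond asserting that \cref{thm:bct} immediately implies the fiberwise version): pull back along an arbitrary $Z \to Y$, use that preimage of sublocales commutes with arbitrary intersections, and apply Isbell's theorem in the fiber product. The one ingredient you rightly flag — that $(g')^{-1}$ preserves all intersections of sublocales — is indeed standard (it is the right adjoint to the sublocale image map, and the coframe meet of sublocales is the categorical wide pullback), so there is no gap.
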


\subsection{Localic groups}

A \defn{localic group} consists of an underlying locale $G$, equipped with continuous maps $m : G \times G -> G$ (multiplication), $i : G -> G$ (inverse), and $e \in G$, i.e., $e : \*1 -> G$ (identity), satisfying the group axioms internally in $\!{Loc}$.
For a basic reference, see \cite{Wgrp} or \cite[Ch.~XV]{PPloc}.

The following simple fact plays a key role in the theory of localic groups (cf.\ \cite[XV~4.1]{PPloc}):

\begin{lemma}
\label{thm:action-twist}
For any group object $G$ in a category with finite products, group action $a : G \times X -> X$ on another object, and morphism $f : A -> G$, the morphism $a \circ (f \times 1_X) : A \times X -> X$ is isomorphic over $X$ to the second projection $\pi_2$:
\begin{equation*}
\begin{tikzcd}[baseline=(\tikzcdmatrixname-2-1.base)]
A \times X
    \dar["f \times 1_X"']
    \rar[shift left, "{(\pi_1, a \circ (f \times 1_X))}"]
&[4em]
A \times X
    \dar["\pi_2"]
    \lar[shift left, "{(\pi_1, a \circ (if \times 1_X))}"]
\\
G \times X
    \rar[r, "a"'] &
X
\end{tikzcd}
\qed
\end{equation*}
\end{lemma}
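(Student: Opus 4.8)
The plan is to notice that everything in the diagram except the assertion that the two horizontal arrows are isomorphisms is formal. Abbreviate
\[
  \phi := (\pi_1,\, a \circ (f \times 1_X)), \qquad
  \psi := (\pi_1,\, a \circ (if \times 1_X)) \colon A \times X \to A \times X .
\]
Both triangles in the statement commute for formal reasons: $\pi_2 \circ \phi = a \circ (f \times 1_X)$ holds by the definition of a pairing, and $a \circ (f \times 1_X) \circ \psi = \pi_2 \circ \phi \circ \psi = \pi_2$ once we know $\phi \circ \psi = 1$. So the entire content of the lemma is that $\phi$ and $\psi$ are mutually inverse.

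I would deduce this from the classical fact that the \emph{shear} (or \emph{twist}) map $s := (\pi_1,\, a) \colon G \times X \to G \times X$ is an isomorphism, with inverse $s^{-1} = (\pi_1,\, a \circ (i \times 1_X))$. For a group acting on a \emph{set} this is a one-line check ($(g,x) \leftrightarrow (g, g^{-1}x)$), and hence, applying the Yoneda embedding $\mathcal C \hookrightarrow [\mathcal C^{\mathrm{op}}, \mathbf{Set}]$ of the ambient category — which is fully faithful and preserves finite products, so carries the group $G$ acting on $X$ to a presheaf of groups acting pointwise on a presheaf of sets, and reflects equalities of morphisms — it holds in any category $\mathcal C$ with finite products. (Equivalently, one can just verify $s \circ s^{-1} = s^{-1} \circ s = 1$ directly from the action axioms: associativity, unitality, and $i$ being a two-sided inverse.) Now, unwinding the pairings, $\phi$ and $\psi$ sit in commuting squares $(f \times 1_X) \circ \phi = s \circ (f \times 1_X)$ and $(f \times 1_X) \circ \psi = s^{-1} \circ (f \times 1_X)$, so
$(f \times 1_X) \circ (\psi \circ \phi) = s^{-1} \circ s \circ (f \times 1_X) = f \times 1_X$.
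Postcomposing with the second projection and using $\pi_2 \circ (f \times 1_X) = \pi_2$ gives $\pi_2 \circ (\psi \circ \phi) = \pi_2$, while $\pi_1 \circ (\psi \circ \phi) = \pi_1$ since $\phi$ and $\psi$ both have $\pi_1$ as first component; hence $\psi \circ \phi = 1_{A \times X}$, and symmetrically $\phi \circ \psi = 1_{A \times X}$.

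I do not expect a genuine obstacle; this is essentially bookkeeping. The two things to keep straight are that in the last step one should compare second coordinates rather than attempt to cancel the (in general non-monic) map $f \times 1_X$, and that the ``obvious'' invertibility of the shear $s$ is exactly the place where the group axioms are used — so for a self-contained argument avoiding Yoneda one must spend a couple of lines checking $s \circ s^{-1} = s^{-1} \circ s = 1$.
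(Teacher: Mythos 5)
Your argument is correct: the identities $(f \times 1_X) \circ \phi = s \circ (f \times 1_X)$ and $(f \times 1_X) \circ \psi = s^{-1} \circ (f \times 1_X)$ check out, and comparing the two components of $\psi \circ \phi$ (rather than trying to cancel $f \times 1_X$) is exactly the right way to close the loop. The paper marks this lemma with \qed and omits the proof as routine; what you have written is precisely the standard verification being elided, reducing to the invertibility of the shear map $(\pi_1, a)$, so there is no divergence to report.
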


\begin{corollary}
\label{thm:action-open}
For a localic group $G$, continuous action $a : G \times X -> X$, and continuous map $f : A -> G$, the map $a \circ (f \times 1_X) : A \times X -> X$ is open, i.e., for any $U \in \@O(A)$ and $V \in \@O(X)$, we have a pullback-stable open image
\begin{equation*}
f(U) \cdot V := \im(U \times V `-> A \times X --->{f \times 1_X} G \times X --->{a} X) \in \@O(X).
\qed
\end{equation*}
\end{corollary}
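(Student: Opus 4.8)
The plan is to obtain the statement formally from \cref{thm:action-twist}, together with the fact, recalled in \cref{sec:loc}, that a product projection is an open map whose left adjoint is pullback-stable. Applying \cref{thm:action-twist} to the given action $a : G \times X -> X$ and map $f : A -> G$ produces an isomorphism $\phi := (\pi_1, a \circ (f \times 1_X)) : A \times X -> A \times X$, with inverse $(\pi_1, a \circ (if \times 1_X))$, satisfying $\pi_2 \circ \phi = a \circ (f \times 1_X)$. In other words, $a \circ (f \times 1_X) : A \times X -> X$ and the second projection $\pi_2 : A \times X -> X$ are isomorphic as objects of the slice category $\!{Loc}/X$.

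Now $\pi_2 : A \times X -> X$, being the pullback of $A -> \*1$ along $X -> \*1$, is an open map, and its left adjoint $\pi_2(-) : \@O(A \times X) -> \@O(X)$ is pullback-stable along any $Z -> X$ by the definition of ``open map'' in use here. Both of these are properties of an object of $\!{Loc}/X$ that are invariant under isomorphism: an isomorphism is itself open, pullback-stably so, with left adjoint given by inverse image along its inverse; and a composite of open maps is open, pullback-stably, with left adjoint the (appropriately ordered) composite of the two left adjoints. Hence $a \circ (f \times 1_X) = \pi_2 \circ \phi$ is open and its left adjoint $W |-> \pi_2(\phi(W))$ is pullback-stable along maps $Z -> X$. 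Evaluating at $W = U \times V$ yields exactly the image $f(U) \cdot V$ of the statement, now identified with $\pi_2(\phi(U \times V)) \in \@O(X)$, and its pullback-stability follows from that of $\pi_2(-)$ and of $\phi(-)$.

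I do not expect a genuine obstacle here: the substantive content is entirely packaged in \cref{thm:action-twist}, and what remains is the formal remark that ``open with pullback-stable left adjoint'' is an isomorphism-invariant property in $\!{Loc}/X$. The one point worth spelling out is that the pullback-stability asserted is along maps into the base $X$ --- so as to match the open-map hypothesis on $\pi_2$ --- rather than along maps into $A$ or $G$; granting this, any pullback of $a \circ (f \times 1_X)$ along some $Z -> X$ is transported by $\phi$ to the corresponding pullback of $\pi_2$, where the computation for product projections applies directly.
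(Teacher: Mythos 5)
Your argument is exactly the paper's intended one: the corollary is stated with an immediate \qed because, as you say, \cref{thm:action-twist} exhibits $a \circ (f \times 1_X)$ as isomorphic over $X$ to the projection $\pi_2 : A \times X \to X$, which is open with pullback-stable images as a pullback of $A \to \*1$, and openness with pullback-stable left adjoint is invariant under isomorphism in $\!{Loc}/X$. No gaps; your remark about which base the pullback-stability is taken over is the right point to be careful about, and you handle it correctly.
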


\begin{remark}
Taking $a := m$ and $f$ an embedding yields the main result of \cite{Wgrp}.
\end{remark}

Taking $f := 1_G$, we get that $m$ is open, i.e., for $U, V \in \@O(G)$, we have a product set $U \cdot V := m(U \times V) \in \@O(G)$.
For more general products of sets $S \cdot T := m(S \times T)$, where $S, T \subseteq G$ are not necessarily open (sublocales, say), our convention on images from above applies: we will only ever use the notion when the image is pullback-stable.
(See also \cref{thm:action-borel-open} below.)
We use similar notation for inverses of sets: $S^{-1} := i(S) = i^*(S)$ (using that $i = i^{-1}$).

Since the rectangles $U \times V$ form a basis of open sets in $G \times G$, we have, for any $W \in \@O(G)$,
\begin{equation*}
\textstyle
m^*(W) = \bigcup \{U \times V \mid U, V \in \@O(G) \AND U \cdot V \subseteq W\}.
\end{equation*}
Using this, the group axioms yield the following algebraic identities on open sets:
\begin{align}
\label{eq:grp-unit}
&U = \bigcup \{V \in \@O(G) \mid \exists e \in W \in \@O(G)\, (V \cdot W \subseteq U)\} &&\text{(by ``$g \cdot e = g$'')}, \\
\label{eq:grp-linv}
&\bigcup \underbrace{\{V \in \@O(G) \mid V^{-1} \cdot V \subseteq U\}}_{=: \@L_U}
= \begin{cases}
   G &\text{if $e \in U \in \@O(G)$}, \\
   \emptyset &\text{if $e \not\in U \in \@O(G)$}
   \end{cases}
&&\text{(by ``$g^{-1} \cdot g = e$'')}, \\
\label{eq:grp-rinv}
&\bigcup \underbrace{\{V \in \@O(G) \mid V \cdot V^{-1} \subseteq U\}}_{=: \@R_U}
= \text{similarly}.
\end{align}

For each $e \in U \in \@O(G)$, the set $\@L_U \subseteq \@O(G)$ on the left-hand side of \eqref{eq:grp-linv} thus forms an open cover of $G$; similarly for the set $\@R_U$ on the left-hand side of \eqref{eq:grp-rinv}.
Now if $e \in W \in \@O(G)$ and $V \cdot W \subseteq U$ as in \eqref{eq:grp-unit}, then for every $X \in \@L_W$ which meets $V$, we have
\begin{equation*}
\begin{aligned}
X
&= e \cdot X \\
&\subseteq (V \cap X) \cdot (V \cap X)^{-1} \cdot X  &&\text{by \eqref{eq:grp-rinv}, since $V \cap X \ne \emptyset$} \\
&\subseteq V \cdot X^{-1} \cdot X \\
&\subseteq V \cdot W \subseteq U;
\end{aligned}
\end{equation*}
thus since $\@L_W$ is an open cover, we get $V \subseteq \neg \bigcup \{X \in \@L_W \mid V \cap X = \emptyset\} \subseteq U$, which by \eqref{eq:grp-unit} shows that $G$ is \defn{regular} (every open set is the union of the interiors of its closed subsets).
In fact, the families of open covers $\@L_U$ and $\@R_U$, as $U$ varies over all identity neighborhoods, form bases for uniformities on $G$, called the \defn{left and right uniformities} on $G$ respectively; their join is the \defn{two-sided uniformity} (see \cite{BVgrp} for details; we will not need these uniformities, except briefly in passing, in \cref{rmk:polish}).

\subsection{Descriptive set theory}
\label{sec:dst}

After \cref{sec:pettis}, we will need to refer to some less standard descriptive set-theoretic notions from \cite{Cborloc}.
We now briefly review these, and use them to justify the pullback-stable notions defined in \cref{sec:loc}.
The reader who is interested only in the Pettis--Johnstone theorem can safely skip most of this subsection; the only thing from here needed in \cref{sec:pettis} is \cref{thm:dense-epi}, which uses only the standard notion of dissolution locale.

First, let us point out that our ``descriptive set theory'' differs from some other versions in the literature, e.g., \cite{Idst}, which develop everything inside the non-Boolean lattice of sublocales.
By contrast, our theory is fundamentally Boolean, and involves much more complicated ``sets'' than just sublocales; as a result, it closely resembles classical descriptive set theory (for Polish spaces).

Given a locale $X$, we let $\@B_\infty(X)$ denote the free complete Boolean algebra generated by the frame $\@O(X)$, and call its elements \defn{$\infty$-Borel sets} in $X$.
(Note that $\@B_\infty(X)$ is typically a proper class; see e.g., \cite[3.5.13]{Cborloc}.)
We may ramify $\@B_\infty(X)$ into the transfinite hierarchy given by the ``frame of nuclei'' functor $\@N$, which freely adjoins complements to frames (see e.g., \cite[C1.1.20]{Jeleph}):
\begin{equation*}
\underset{\substack{\parallel \\ \infty\Sigma^0_1(X)}}{\@O(X)} \subseteq
\underset{\substack{\parallel \\ \infty\Sigma^0_2(X)}}{\@N(\@O(X))} \subseteq
\underset{\substack{\parallel \\ \infty\Sigma^0_3(X)}}{\@N^2(\@O(X))} \subseteq
\dotsb \subseteq
\underset{\substack{\parallel \\ \infty\Sigma^0_\omega(X)}}{\@N^\omega(\@O(X))} \subseteq \dotsb \subseteq \@B_\infty(X)
\end{equation*}
We call the elements of $\@N^\alpha(\@O(X))$ the \defn{$\infty\Sigma^0_{1+\alpha}$-sets of $X$}, and their complements (in $\@B_\infty(X)$) the \defn{$\infty\Pi^0_{1+\alpha}$-sets}.
By a result of Isbell, the $\infty\Pi^0_2$-sets are in canonical order-preserving bijection with sublocales (in a manner compatible with the identification of open sets with sublocales).
We may thus use the notation $B \subseteq X$ to refer to any $\infty$-Borel set in $X$, compatible with the earlier use for sublocales.
See \cite[\S3]{Cborloc} for details.

Isbell's correspondence also justifies our calling \cref{thm:bct} a ``Baire category theorem'' for locales: it says that dense $\infty\Pi^0_2$-sets are closed under intersection.
We denote the smallest dense $\infty\Pi^0_2$-set (i.e., sublocale) of $X$ by
\begin{equation*}
\Comgr(X) \subseteq X,
\end{equation*}
and think of it as the smallest ``comeager'' set; we thus define an arbitrary $B \in \@B_\infty(X)$ to be \defn{comeager} if it contains $\Comgr(X)$ (equivalently, any intersection of dense $\infty\Pi^0_2$-sets), and \defn{meager} if disjoint from $\Comgr(X)$.
Recall that the quotient frame $\@O(X) ->> \@O(\Comgr(X))$ is given up to isomorphism by the \defn{regular open algebra} of $X$.
This easily implies that every $B \in \@B_\infty(X)$ has the \defn{property of Baire}, i.e., differs from an open set by a meager set.
See \cite[\S3.8]{Cborloc}.

For a continuous locale map $f : X -> Y$, the frame homomorphism $f^* : \@O(Y) -> \@O(X)$ extends to a complete Boolean homomorphism $f^* : \@B_\infty(Y) -> \@B_\infty(X)$.
More generally, we say that an \defn{$\infty$-Borel map} $f : X -> Y$ is an arbitrary complete Boolean homomorphism $f^* : \@B_\infty(Y) -> \@B_\infty(X)$ (equivalently, frame homomorphism $f^* : \@O(Y) -> \@B_\infty(X)$).
We let $\!{\infty BorLoc}$ denote the category of locales and $\infty$-Borel maps; then the forgetful functor $\!{Loc} -> \!{\infty BorLoc}$ is faithful and preserves limits.
Again see \cite[\S3]{Cborloc}.
In particular, every localic group $G$ has an underlying ``$\infty$-Borel group''; and it makes sense to ask whether an $\infty$-Borel set $B \in \@B_\infty(G)$ is a subgroup (e.g., closure under multiplication means $B \times B \subseteq m^*(B) \in \@B_\infty(G \times G)$).

\begin{lemma}[cf.\ {\cite[XV~2.2]{PPloc}}]
\label{thm:closure}
Let $G$ be a localic group, $B \in \@B_\infty(G)$ be an $\infty$-Borel subgroup.
Then the closure $\-B \in \infty\Pi^0_1(G)$ is still a (localic) subgroup.
\end{lemma}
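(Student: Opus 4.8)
The plan is to check directly that the closed sublocale $\-B \subseteq G$ satisfies the three subgroup conditions. That $e \in \-B$ is immediate, since $e \in B \subseteq \-B$. Closure under inversion is also easy: $i : G \to G$ is an isomorphism with $i = i^{-1}$, hence preserves closures, so $i(\-B) = \-{i(B)} = \-B$, using that $B$, being a subgroup, satisfies $i(B) = i^*(B) = B$. The real content is closure under multiplication, i.e.\ that $m$ restricts to a continuous map $\-B \times \-B \to \-B$. Writing $W := \bigcup\{U \in \@O(G) \mid U \cap B = \emptyset\}$ for the largest open set disjoint from $B$ (so that $\-B = G \setminus W$ as sublocales), the restriction $m|_{\-B \times \-B}$ factors through the closed sublocale $\-B$ iff its pullback of $W$ is empty, i.e.\ iff $(\-B \times \-B) \cap m^*(W) = \emptyset$ in $G \times G$.

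To analyze this, I would expand $m^*(W) = \bigcup\{U \times V \mid U, V \in \@O(G) \AND U \cdot V \subseteq W\}$ as in \cref{sec:loc}. Since a sublocale is disjoint from a union of open sets iff it is disjoint from each of them, since $(\-B \times \-B) \cap (U \times V) = (\-B \cap U) \times (\-B \cap V)$ vanishes iff one of its two factors does, and since $\-B \cap U = \emptyset$ iff $U \cap B = \emptyset$ (both say $U \subseteq W$), the condition $(\-B \times \-B) \cap m^*(W) = \emptyset$ reduces to the following assertion, which is where the group structure enters: \emph{whenever $U, V \in \@O(G)$ satisfy $U \cdot V \subseteq W$, one of $U \cap B$, $V \cap B$ is empty.} To see this, note that the $\infty$-Borel set $(U \cap B) \cdot (V \cap B)$ lies in $(U \cdot V) \cap (B \cdot B) \subseteq W \cap B = \emptyset$, so it is empty; but $(U \cap B) \cdot (V \cap B) = m\bigl(\pi_1^*(U \cap B) \cap \pi_2^*(V \cap B)\bigr)$, and since $m(Z) = \emptyset$ forces $Z = \emptyset$ (as $m^*(\emptyset) = \emptyset$), while an $\infty$-Borel ``rectangle'' $\pi_1^*(S) \cap \pi_2^*(T)$ is empty only when $S$ or $T$ is, we conclude $U \cap B = \emptyset$ or $V \cap B = \emptyset$. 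The auxiliary fact just used, that $\pi_1^*(S) \cap \pi_2^*(T) \neq \emptyset$ for nonempty $S, T \in \@B_\infty(G)$, I would obtain from the openness of the projection $\pi_1 : G \times G \to G$ together with pullback-stability and Frobenius reciprocity for $\infty$-Borel images of open maps (\cite{Cborloc}, cf.\ \cref{thm:action-borel-open}): applying $(\pi_1)_!$ to $\pi_1^*(S) \cap \pi_2^*(T)$ yields $S \cap (\pi_1)_!\pi_2^*(T)$, and $(\pi_1)_!\pi_2^*(T)$ is all of $G$ once $T \neq \emptyset$ (via the square exhibiting $\pi_1$ as a pullback of $G \to \*1$).

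Morally this repackages the classical argument $m(\-B \times \-B) \subseteq \-{m(B \times B)} = \-{B \cdot B} \subseteq \-B$, using that the open map $m$ carries the closure of an $\infty$-Borel set into the closure of its image (because $m^*$ of a closed set is closed) and that closure commutes with finite products. I expect the only real obstacle to be keeping the $\infty$-Borel bookkeeping honest: products of $\infty$-Borel sets behave badly in general — indeed the failure of coproducts of complete Boolean algebras in \cref{sec:cbool} is precisely about this — so the nontrivial inputs are the results of \cite{Cborloc} guaranteeing that images, their pullback-stability, and Frobenius reciprocity for open maps all persist at the level of $\@B_\infty$. Once those are in hand, the genuinely group-theoretic step is the trivial containment $(U \cap B) \cdot (V \cap B) \subseteq W \cap B = \emptyset$, exactly mirroring the Baire-category-free proof that the closure of a subgroup of a topological group is a subgroup.
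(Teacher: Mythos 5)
Your proof is correct and follows essentially the same route as the paper's: both arguments come down to the fact that a ``rectangle'' $\pi_1^*(S) \cap \pi_2^*(T)$ of $\infty$-Borel sets is empty only if $S$ or $T$ is, applied with $S = U \cap B$ and $T = V \cap B$ for the basic open rectangles $U \times V$ refining $m^*$ of an open set, together with the subgroup condition in the form $B \times B \subseteq m^*(B)$. The paper merely packages this as $\overline{B} \times \overline{B} = \overline{B \times B} \subseteq \overline{m^*(B)} \subseteq m^*(\overline{B})$, whereas you verify disjointness from $m^*(W)$ directly; the difference is organizational, not mathematical.
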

\begin{proof}
Closure under inversion $i$ is straightforward.
To show closure of $\-B$ under multiplication $m$: we have $\-B \times \-B = \-{B \times B}$, because any basic open rectangle $U \times V$ disjoint from $B \times B$ must have $U \cap B = \emptyset$ or $V \cap B = \emptyset$, whence $ U \cap \-B = \emptyset$ or $V \cap \-B = \emptyset$, whence $(U \times V) \cap (\-B \times \-B) = \emptyset$; $\-{B \times B} \subseteq \-{m^*(B)}$ because $B$ is closed under $m$; and $\-{m^*(B)} \subseteq m^*(\-B)$ by continuity of $m$.
\end{proof}

The \defn{$\infty$-Borel image} of an $\infty$-Borel set $B \in \@B_\infty(X)$ under an $\infty$-Borel map $f : X -> Y$ is the smallest $f(B) \in \@B_\infty(Y)$ whose pullback contains $B$; the $\infty$-Borel image of the map $f$ is the $\infty$-Borel image of $X$ under it.
See \cite[\S3.4]{Cborloc} for a detailed discussion of this notion.
The $\infty$-Borel image may not exist \cite[4.4.3]{Cborloc}; but if it does exist, it is automatically pullback-stable.
The connection with image sublocales, and epimorphisms in $\!{Loc}$, is given by (see \cite[3.4.19]{Cborloc}):

\begin{proposition}[Wilson]
\label{thm:epi-pullback}
A continuous map $f : X -> Y \in \!{Loc}$ is a pullback-stable epimorphism in $\!{Loc}$ iff it is \defn{$\infty$-Borel surjective}, i.e., $Y$ is its $\infty$-Borel image.

Thus, the epi--regular mono image of $f : X -> Y$, i.e., the smallest sublocale (= $\infty\Pi^0_2$-set) of $Y$ whose pullback is all of $X$, is the $\infty$-Borel image iff it is a pullback-stable factorization in $\!{Loc}$.
\end{proposition}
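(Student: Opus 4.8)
The plan is to reduce the statement to the single equivalence that $f$ is a pullback-stable epimorphism in $\!{Loc}$ iff the induced complete Boolean homomorphism $f^* : \@B_\infty(Y) -> \@B_\infty(X)$ is injective. Unwinding the definition of $\infty$-Borel image, $Y$ is the $\infty$-Borel image of $f$ iff the only $B \in \@B_\infty(Y)$ with $f^*(B) = \top$ is $B = \top$; and since $f^*$ sends $\top$ to $\top$ and preserves complements, this is exactly the triviality of $\ker(f^*)$, i.e.\ injectivity of $f^*$ as a Boolean homomorphism. For the easy direction of the equivalence: if $f^* : \@B_\infty(Y) -> \@B_\infty(X)$ is injective then, as $\@O(Y) `-> \@B_\infty(Y)$ and $\@O(X) `-> \@B_\infty(X)$ are inclusions, also $f^* : \@O(Y) -> \@O(X)$ is injective, so $f$ is an epimorphism; moreover $Y$ is then the $\infty$-Borel image of $f$, which is automatically pullback-stable, so each pullback $f'$ of $f$ likewise has its codomain as $\infty$-Borel image, hence, as above, is also an epimorphism. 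Thus $f$ is a pullback-stable epimorphism.

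The content is the converse: assuming $f$ is a pullback-stable epimorphism, I want $f^* : \@B_\infty(Y) -> \@B_\infty(X)$ injective. Using the presentation of $\@B_\infty(-)$ as the transfinite filtered colimit of the frames $\@N^\alpha(\@O(-))$ along the dissolution tower, and that a filtered colimit of injective frame homomorphisms is injective, it suffices to show each $\@N^\alpha(f^*) : \@N^\alpha(\@O(Y)) -> \@N^\alpha(\@O(X))$ is injective, i.e.\ that the map $f^{(\alpha)} : X^{(\alpha)} -> Y^{(\alpha)}$ with $\@O(X^{(\alpha)}) = \@N^\alpha(\@O(X))$ is an epimorphism. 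I would prove the stronger claim, by transfinite induction on $\alpha$, that the canonical comparison map $\psi_\alpha : X^{(\alpha)} -> Y^{(\alpha)} \times_Y X$ is a pullback-stable epimorphism; this gives what is wanted because $f^{(\alpha)}$ is $\psi_\alpha$ followed by the projection $Y^{(\alpha)} \times_Y X -> Y^{(\alpha)}$, which is a pullback of $f$ and so a pullback-stable epimorphism. The base case is trivial; the limit case holds because pullbacks in $\!{Loc}$ are pushouts of frames and cofiltered limits in $\!{Loc}$ are filtered colimits of frames, colimits commute with colimits, and a cofiltered limit of pullback-stable epimorphisms is one; and the successor step factors $\psi_{\alpha+1}$ as the dissolution comparison map of $f^{(\alpha)}$ — which is a pullback-stable epimorphism by the inductive hypothesis, being a pullback of $f$ precomposed with $\psi_\alpha$ — followed by the pullback of $\psi_\alpha$ along the dissolution map $(Y^{(\alpha)})_{\mathrm{diss}} -> Y^{(\alpha)}$. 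So the successor step reduces to the case $\alpha = 1$, which is the key lemma.

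That key lemma is the step I expect to be the main obstacle: \emph{if $g : A -> B$ is a pullback-stable epimorphism, then the comparison map $A_{\mathrm{diss}} -> B_{\mathrm{diss}} \times_B A$ is a pullback-stable epimorphism}. On frames this asks that the canonical map $\@N(\@O(B)) \otimes_{\@O(B)} \@O(A) -> \@N(\@O(A))$ be injective, and remain so under base change — in other words, that freely adjoining complements to only the pulled-back open sets $g^*(V)$, $V \in \@O(B)$, introduces no collapses beyond those already forced on $\@N(\@O(A))$. I would attack this by presenting $\@N(\@O(B)) \otimes_{\@O(B)} \@O(A)$ via its ``locally closed'' generators $U \cap \neg g^*(V)$ (for $U \in \@O(A)$, $V \in \@O(B)$), and checking that such a family covers $\top$ in $\@N(\@O(A))$ only for reasons traceable to covers in $A$ together with covers pulled back from $B$; verifying these cover-relations is exactly where the pullback-stable surjectivity of $g$ enters, by passing to suitable base changes. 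This is the kind of computation handled by the dissolution machinery of \cite[\S3]{Cborloc}, and I would expect to invoke its results on how pullback-stable images interact with dissolutions rather than redo them by hand.

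Finally, the second paragraph of the statement is a corollary of the first, applied to the epi part of $f$: for the epi--regular mono factorization $f = (X ->> I `-> Y)$, the factorization is pullback-stable precisely when $X ->> I$ is a pullback-stable epimorphism, hence (by the first part, applied to $X ->> I$) precisely when $I$ is the $\infty$-Borel image of $X ->> I$; and since $I `-> Y$ is a sublocale, this coincides with $I$ being the $\infty$-Borel image of $f$.
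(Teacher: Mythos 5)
The paper itself does not prove this proposition: it is quoted from \cite[3.4.19]{Cborloc}, where it is attributed to Wilson, with only the remark that the proof rests on making $\infty$-Borel sets open in sufficiently high dissolutions. So there is no in-paper argument to compare against; judged on its own terms, your proposal has a sensible architecture in the same dissolution-based spirit, but it has a genuine gap at exactly the point where all the content lives. Your reductions are fine: translating ``$Y$ is the $\infty$-Borel image'' into injectivity of $f^* : \@B_\infty(Y) \to \@B_\infty(X)$; the easy direction via automatic pullback-stability of $\infty$-Borel images; passing from $\@B_\infty$ to the levels $\@N^\alpha(\@O(-))$ of the dissolution tower; the bookkeeping with the comparison maps $\psi_\alpha$; and the deduction of the second paragraph from the first (modulo the further fact from \cite{Cborloc} that $\@B_\infty$ of a sublocale is the corresponding principal ideal).

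Everything is thereby funneled into your ``key lemma'': if $g : A \to B$ is a pullback-stable epimorphism, then the comparison $\@D(A) \to \@D(B) \times_B A$ is one, i.e., $\@N(\@O(B)) \otimes_{\@O(B)} \@O(A) \to \@N(\@O(A))$ is (stably) injective. This is not a routine verification to be ``checked'': it is essentially the proposition itself at the first level of the hierarchy, and it is precisely where the hypothesis of pullback-stability (as opposed to mere epimorphy, which would only give a subframe inclusion $\@O(B) \hookrightarrow \@O(A)$, for which the analogous injectivity claim is far from clear) must be used. Your sketch --- present the tensor product by locally closed generators and check that covers of $\top$ are ``traceable'' to covers in $A$ together with covers pulled back from $B$ --- never says how the hypothesis enters, and your fallback of invoking the results of \cite[\S 3]{Cborloc} on how pullback-stable images interact with dissolutions is circular here, since \cite[3.4.19]{Cborloc} \emph{is} the statement being proved. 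A secondary, smaller gap: at limit stages you assert that a cofiltered limit of pullback-stable epimorphisms is one; dually this requires filtered colimits of frames (which, frames being infinitary algebras, are not computed on underlying sets except along proper-class-length chains) to preserve pushout-stable monomorphisms, and that also needs justification rather than an appeal to ``colimits commute with colimits''.
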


In particular, this means that open maps, defined the usual way (see \cref{sec:loc}), are equivalently defined by each open set in the domain having an $\infty$-Borel image which is open.

The proof of \cref{thm:epi-pullback}, and similar results connecting $\infty$-Borel notions with pullback-stability, is based on the fact that every $\infty$-Borel set $B \in \@B_\infty(X)$ can be made open after ``refining the topology of $X$''.
The \defn{dissolution} of $X$ is the locale $\@D(X)$ with $\@O(\@D(X)) := \@N(\@O(X))$ (see \cite[p498]{Jeleph}, \cite[\S3.3]{Cborloc}).
Thus, every $\infty$-Borel set becomes open in a sufficiently high dissolution $\@D^\alpha(X) -> X$; and the $\infty$-Borel category $\!{\infty BorLoc}$ can be defined as the localization of $\!{Loc}$ inverting all such dissolution maps.
In particular, every $\infty$-Borel set $B$ is the $\infty$-Borel image of a continuous monomorphism (namely the open sublocale $B \subseteq \@D^\alpha(X)$).
Taking such an $f$ in \cref{thm:action-open} shows

\begin{corollary}
\label{thm:action-borel-open}
For a localic group $G$, continuous action $a : G \times X -> X$, $\infty$-Borel $B \subseteq G$, and open $U \subseteq X$, the $\infty$-Borel image $B \cdot U := a(B \times U) \subseteq X$ exists and is open.
\qed
\end{corollary}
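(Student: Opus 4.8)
The plan is to deduce this from \cref{thm:action-open} by realizing $B$ as (the $\infty$-Borel image of) a continuous map into $G$. Since $B \in \@B_\infty(G)$, the discussion of dissolutions above provides an ordinal $\alpha$ together with an open sublocale of $\@D^\alpha(G)$ representing $B$; write $A$ for that open sublocale regarded as a locale in its own right, and let $f \colon A \hookrightarrow \@D^\alpha(G) \to G$ be the composite of the open inclusion with the dissolution map. First I would note that $f$ is a continuous map whose $\infty$-Borel image is exactly $B \subseteq G$: the inclusion $A \hookrightarrow \@D^\alpha(G)$ has $\infty$-Borel image $B$ (it is literally open there), the dissolution map is inverted in $\!{\infty BorLoc}$, and $\infty$-Borel images compose, so the composite $f$ has $\infty$-Borel image $B$ (see \cite[\S3.4]{Cborloc}).

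Next I would apply \cref{thm:action-open} to this $f$, taking the open set of the domain to be $\top \in \@O(A)$ and the open set of $X$ to be $U$. That corollary then asserts directly that $a \circ (f \times 1_X) \colon A \times X \to X$ is open and that the pullback-stable image of $\top \times U = A \times U \in \@O(A \times X)$ under it is an open subset of $X$. By the identification of pullback-stable images of open sets with $\infty$-Borel images (the discussion around \cref{thm:epi-pullback}), this open set is precisely the $\infty$-Borel image of $A \times U$ under $a \circ (f \times 1_X)$. It therefore remains to check that this equals $a(B \times U)$, i.e.\ the $\infty$-Borel image under $a$ of the $\infty$-Borel set $B \times U = \pi_1^*(B) \cap \pi_2^*(U) \in \@B_\infty(G \times X)$.

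For that I would argue in two steps, both purely formal. Since $f \times 1_X \colon A \times X \to G \times X$ is the pullback of $f \colon A \to G$ along $\pi_1 \colon G \times X \to G$, and since an $\infty$-Borel image that exists is automatically pullback-stable, the $\infty$-Borel image of $A \times X$ under $f \times 1_X$ is $\pi_1^*(B) = B \times X$; intersecting with the open set $G \times U = \pi_2^*(U)$ and using the projection formula for $\infty$-Borel images then shows that $A \times U$ has $\infty$-Borel image $B \times U$ under $f \times 1_X$. Composing with $a$ and once more invoking functoriality of $\infty$-Borel images identifies the $\infty$-Borel image of $A \times U$ under $a \circ (f \times 1_X)$ with $a(B \times U)$ --- which is the open set produced in the previous paragraph, so $a(B \times U)$ exists and is open. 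I expect the only slightly delicate point to be the bookkeeping with $\infty$-Borel images --- that they compose, that existence forces pullback-stability, and that they satisfy the projection (Frobenius) identity --- so I would pin down the precise statements of these in \cite{Cborloc}; given \cref{thm:action-open}, everything else is immediate.
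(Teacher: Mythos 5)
Your proposal is correct and is essentially the paper's own argument: the paper likewise realizes $B$ as the $\infty$-Borel image of the continuous map $B \subseteq \@D^\alpha(G) \to G$ and feeds that map into \cref{thm:action-open}, leaving the image bookkeeping implicit. Your extra verification that the resulting open set really is $a(B \times U)$ (via pullback-stability along $\pi_1$, the projection formula, and composition of images) is exactly the right way to fill in the details the paper omits.
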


The definitions of an $\infty$-Borel set $B \subseteq X$ being \defn{(fiberwise) dense} (over continuous $p : X -> Y$), or more generally, of an $\infty$-Borel map $f : B -> X$ having \defn{(fiberwise) dense image}, are the same as in \cref{sec:loc}.
Again by considering dissolutions of $B$, this notion actually reduces to the earlier notion of a continuous $f$ having (fiberwise) dense image.
For fiberwise density, considering dissolutions again yields the following more ``intrinsic'' characterization, which will however not be needed in the rest of the paper;
its proof uses a result from \cite{Cborloc} as a black box.

\begin{proposition}
\label{thm:dense-pullback}
For $\infty$-Borel $f : A -> X$ and continuous $p : X -> Y$, the following are equivalent:
\begin{enumerate}
\item[(i)]  $f$ has fiberwise dense image over $p$, in the pullback-stable sense defined in \cref{sec:loc};
\item[(ii)]  the pullback under $f$ of every nonempty ``basic fiberwise open'' set in $X$, of the form $U \cap p^*(B)$ where $U \in \@O(X)$ and $B \in \@B_\infty(Y)$, remains nonempty.
\end{enumerate}
\end{proposition}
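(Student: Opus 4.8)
The plan is to interpolate a further condition, namely: \emph{(iii) for every ordinal $\alpha$, the pullback $f_\alpha : A \times_Y \@D^\alpha(Y) -> X \times_Y \@D^\alpha(Y)$ of $f$ along the dissolution $d_\alpha : \@D^\alpha(Y) -> Y$ has dense image}; and then to prove (i) $\Rightarrow$ (iii) $\Leftrightarrow$ (ii) $\Rightarrow$ (i). (One may as well assume $f$ continuous, by replacing $A$ with a dissolution, which affects neither (i) nor (ii), since dense image is invariant under $\!{\infty BorLoc}$-isomorphism of the domain.) Here (i) $\Rightarrow$ (iii) is trivial, as each $d_\alpha$ is among the maps $Z -> Y$ along which (i) requires density to persist.

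To establish (ii) $\Leftrightarrow$ (iii) I would argue as follows. The dissolution $d_\alpha$ is inverted in $\!{\infty BorLoc}$, and $\!{Loc} -> \!{\infty BorLoc}$ preserves pullbacks, so its pullback $\pi_X : X \times_Y \@D^\alpha(Y) -> X$ becomes an isomorphism in $\!{\infty BorLoc}$ -- i.e.\ $\pi_X^* : \@B_\infty(X) -> \@B_\infty(X \times_Y \@D^\alpha(Y))$ is an isomorphism of complete Boolean algebras -- and likewise the pullback $p_A : A \times_Y \@D^\alpha(Y) -> A$ of $\pi_X$ along $f$. Now $X \times_Y \@D^\alpha(Y)$ has a basis of open rectangles $\pi_X^*(U) \cap \pi_Z^*(B)$ with $U \in \@O(X)$ and $B \in \@O(\@D^\alpha(Y)) = \infty\Sigma^0_{1+\alpha}(Y)$, and since $p \circ \pi_X = d_\alpha \circ \pi_Z$ (and $d_\alpha$ fixes the element $B \in \infty\Sigma^0_{1+\alpha}(Y) \subseteq \@B_\infty(Y)$) we get $\pi_Z^*(B) = \pi_X^*(p^*(B))$, so such a rectangle equals $\pi_X^*(U \cap p^*(B))$, whose $f_\alpha$-pullback is $p_A^*(f^*(U \cap p^*(B)))$. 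As density is detected on any basis of the codomain, and $\pi_X^*, p_A^*$ reflect and preserve emptiness, ``$f_\alpha$ has dense image'' unwinds to: \emph{for all $U \in \@O(X)$ and $B \in \infty\Sigma^0_{1+\alpha}(Y)$, if $U \cap p^*(B) \ne \emptyset$ then $f^*(U \cap p^*(B)) \ne \emptyset$}. Ranging over all $\alpha$, so that $B$ ranges over $\@B_\infty(Y) = \bigcup_\alpha \infty\Sigma^0_{1+\alpha}(Y)$, this is exactly (ii).

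The content of the proposition is thus (ii) $\Rightarrow$ (i), and for it I would invoke as a black box the fact (from \cite{Cborloc}) that $\@B_\infty$ sends a pullback square in $\!{Loc}$ to a pushout of (possibly large) complete Boolean algebras, so that $\@B_\infty(X \times_Y Z) = \@B_\infty(X) \otimes_{\@B_\infty(Y)} \@B_\infty(Z)$; whence the Boolean separation criterion: for $C \in \@B_\infty(X)$ and $D \in \@B_\infty(Z)$, one has $\pi_X^*(C) \cap \pi_Z^*(D) = \emptyset$ in $X \times_Y Z$ iff there is $B \in \@B_\infty(Y)$ with $C \subseteq p^*(B)$ and $D \subseteq g^*(\neg B)$. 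Granting this, fix $g : Z -> Y$; testing density on the rectangles $\pi_X^*(U) \cap \pi_Z^*(V)$ ($U \in \@O(X)$, $V \in \@O(Z)$), whose pullback along $f_Z : A \times_Y Z -> X \times_Y Z$ is $p_A^*(f^*(U)) \cap p_Z^*(V)$ (with $p_A, p_Z$ the projections of $A \times_Y Z$), it suffices to show that $\pi_X^*(U) \cap \pi_Z^*(V) \ne \emptyset$ implies $p_A^*(f^*(U)) \cap p_Z^*(V) \ne \emptyset$. If not, the separation criterion for $A \times_Y Z$, applied with $C := f^*(U)$ and structure map $p \circ f : A -> Y$, yields $B \in \@B_\infty(Y)$ with $f^*(U) \subseteq (p \circ f)^*(B) = f^*(p^*(B))$ and $V \subseteq g^*(\neg B)$; the first inclusion says $f^*(U \cap p^*(\neg B)) = \emptyset$, so the contrapositive of (ii) gives $U \cap p^*(\neg B) = \emptyset$, i.e.\ $U \subseteq p^*(B)$ -- and now $B$ separates $U$ and $V$, so the criterion for $X \times_Y Z$ forces $\pi_X^*(U) \cap \pi_Z^*(V) = \emptyset$, contradicting the hypothesis. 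The main obstacle is entirely absorbed into the quoted description of $\@B_\infty$ of a fibre product (together with the elementary separation lemma for pushouts of Boolean algebras); the rest is routine bookkeeping with dissolutions and with bases for density.
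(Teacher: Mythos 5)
Your proof is correct and follows essentially the same route as the paper's: (i)$\Rightarrow$(ii) by pulling back to a high enough dissolution of $Y$ to make $B$ open, and (ii)$\Rightarrow$(i) by applying a separation theorem to the disjoint pullbacks of $f^*(U)$ and $V$ in $A \times_Y Z$ (the paper phrases this contrapositively, but the bookkeeping is the same). One caveat on presentation: the separation criterion you state for $\@B_\infty(X \times_Y Z) = \@B_\infty(X) \otimes_{\@B_\infty(Y)} \@B_\infty(Z)$ is \emph{not} an elementary consequence of the separation lemma for pushouts of ordinary Boolean algebras --- the complete pushout could a priori collapse further meets to $\bot$ --- it is precisely the localic Lusin separation theorem \cite[4.2.1]{Cborloc} that the paper invokes, so you are using the same nontrivial black box, just under a name that undersells it.
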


\begin{remark}
By the classical Kunugui--Novikov uniformization theorem for fiberwise open Borel sets \cite[28.7]{Kcdst}, a reasonable definition of ``fiberwise open $\infty$-Borel set'' in $X$ would be arbitrary unions of sets of the form $U \cap p^*(B)$ as in (ii).
\end{remark}

\begin{proof}[Proof of \cref{thm:dense-pullback}]
If (i) holds, then (ii) follows by pulling back to a sufficiently high dissolution $\@D^\alpha(Y) -> Y$ making $B$ (hence $U \cap p^*(B)$) open.
Conversely, suppose (i) fails; let
\begin{equation*}
\begin{tikzcd}
Z \times_Y A \dar["f'"'] \rar["g''"] & A \dar["f"] \\
Z \times_Y X \dar["p'"'] \rar["g'"] & X \dar["p"] \\
Z \rar["g"] & Y
\end{tikzcd}
\end{equation*}
be a pullback such that $f'$ no longer has dense image.
By definition of the frame tensor product $\@O(Z \times_Y X) = \@O(Z) \otimes_{\@O(Y)} \@O(X)$, this means there are open $U \subseteq X$ and $V \subseteq Z$ such that
\begin{equation*}
\emptyset \ne p^{\prime*}(V) \cap g^{\prime*}(U), \qquad
\emptyset = f^{\prime*}(p^{\prime*}(V) \cap g^{\prime*}(U))
= f^{\prime*}(p^{\prime*}(V)) \cap g^{\prime\prime*}(f^*(U)).
\end{equation*}
By the Lusin separation theorem for locales \cite[4.2.1; see also 2.12.6]{Cborloc}, there is an $\infty$-Borel $B \subseteq Y$ so that
\begin{equation*}
V \subseteq g^*(B), \qquad
\emptyset = f^*(U) \cap f^*(p^*(B)) = f^*(U \cap p^*(B));
\end{equation*}
the former implies
\begin{equation*}
\emptyset
\ne p^{\prime*}(V) \cap g^{\prime*}(U)
\subseteq p^{\prime*}(g^*(B)) \cap g^{\prime*}(U)
= g^{\prime*}(p^*(B) \cap U),
\end{equation*}
whence $U \cap p^*(B) \ne \emptyset$, whence (ii) fails.
\end{proof}

Finally, we have the following connection between fiberwise density and $\infty$-Borel (i.e., pullback-stable) images.
We again first state the continuous version, which is the pullback-stable version of \cite[1.11(i)]{Jgpd}; the $\infty$-Borel version again follows by considering sufficiently high dissolutions.

\begin{lemma}
\label{thm:dense-epi}
If a continuous $f : A -> X$ has fiberwise dense image over $p : X -> Y$ (in the pullback-stable sense of \cref{sec:loc}), then $p$ is a pullback-stable epimorphism in $\!{Loc}$ iff $p \circ f$ is.
\end{lemma}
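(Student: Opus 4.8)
The plan is to route everything through the identification of pullback-stable epimorphisms with $\infty$-Borel surjections (\cref{thm:epi-pullback}), and to handle the $\infty$-Borel sets that arise by dissolving until they become open. Recall that $p$ (resp.\ $p\circ f$) is a pullback-stable epimorphism iff it is $\infty$-Borel surjective; I would unwind this to the statement that every $\infty$-Borel $D\subseteq Y$ with $p^*(D)=\emptyset$ (resp.\ $(p\circ f)^*(D)=\emptyset$) is already $\emptyset$, since under that hypothesis $Y$ is literally the unique, hence the smallest, $\infty$-Borel set in $Y$ whose pullback is the whole domain (so in particular the $\infty$-Borel image exists and equals $Y$). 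Because $(p\circ f)^*=f^*\circ p^*$ and any frame --- hence any complete Boolean --- homomorphism sends $\emptyset$ to $\emptyset$, the condition for $p\circ f$ immediately forces the one for $p$; this gives the direction ``$p\circ f$ pullback-stable epi $\Rightarrow$ $p$ pullback-stable epi'', and uses nothing about $f$ (not even density), only the triviality that a morphism factoring through $p$ can be epic only if $p$ is.

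For the converse I would assume $p$ is a pullback-stable epimorphism and take an $\infty$-Borel $D\subseteq Y$ with $f^*(p^*(D))=(p\circ f)^*(D)=\emptyset$; since $p$ is $\infty$-Borel surjective it suffices to prove $p^*(D)=\emptyset$. The obstacle to using the definition of ``dense image'' for $f$ directly is that $p^*(D)$ is only an $\infty$-Borel, not an open, subset of $X$, and the way around it is to dissolve: pick $\alpha$ with $D$ open in the dissolution $q:\@D^\alpha(Y)\to Y$, say $q^*(D)=\tilde D\in\@O(\@D^\alpha(Y))$, and form the double pullback of $A\xrightarrow{f}X\xrightarrow{p}Y$ along $q$, with pulled-back maps $f'$, $p'$ and horizontal projections $g'':\@D^\alpha(Y)\times_Y A\to A$ and $g':\@D^\alpha(Y)\times_Y X\to X$. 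Fiberwise density of $f$ over $p$ says exactly that $f'$ still has dense image, and since $q$ is inverted in $\!{\infty BorLoc}$ while $\!{Loc}\to\!{\infty BorLoc}$ preserves pullbacks, $g'$ and $g''$ are isomorphisms in $\!{\infty BorLoc}$, i.e.\ induce isomorphisms on $\@B_\infty$. Chasing the two commuting squares then identifies $p'^*(\tilde D)$ with $g'^*(p^*(D))$ --- a genuine open subset of $\@D^\alpha(Y)\times_Y X$ --- and computes its $f'$-pullback as $g''^*(f^*(p^*(D)))=g''^*(\emptyset)=\emptyset$; density of $f'$ then forces that open set to be empty, and injectivity of $g'^*$ yields $p^*(D)=\emptyset$, as needed. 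The $\infty$-Borel version of the lemma follows, as elsewhere in this section, by replacing $A$ with a sufficiently high dissolution.

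The step I expect to be the crux is precisely this passage through a dissolution, because the more naive hypotheses really are too weak: plain density of $f$ is of no help here --- post-composing a dense monomorphism with an isomorphism need not produce a localic epimorphism, as $\mathbb{Q}\hookrightarrow\mathbb{R}$ shows --- so fiberwise density must genuinely be brought to bear, and the only lever I see for applying it against a non-open $\infty$-Borel set is to render that set open after base change. The point that then must be argued rather than asserted is that pullbacks of dissolution maps remain $\infty$-Borel isomorphisms --- equivalently, that the transition to $\@D^\alpha(Y)\times_Y X$ and $\@D^\alpha(Y)\times_Y A$ loses no information about emptiness of $\infty$-Borel sets --- and this is exactly where the two structural facts ``$\!{\infty BorLoc}$ inverts dissolutions'' and ``$\!{Loc}\to\!{\infty BorLoc}$ preserves limits'' carry the argument; everything else is routine bookkeeping with the two pullback squares.
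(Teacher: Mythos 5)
Your proof is correct, and its engine is the same as the paper's: pull everything back along a dissolution of $Y$ so that the obstructing datum becomes describable by open/closed data, apply fiberwise density of $f$ there, and descend. The packaging differs in two ways worth noting. First, you reduce both hypothesis and conclusion to $\infty$-Borel surjectivity via \cref{thm:epi-pullback} (``$p^*$ reflects $\emptyset$ on $\@B_\infty(Y)$''), which lets you take $D$ to be an arbitrary $\infty$-Borel set, made \emph{open} in a sufficiently high $\@D^\alpha(Y)$, and --- since an $\infty$-Borel image is automatically pullback-stable once it exists --- spares you any separate argument for stability of $p \circ f$. The paper instead stays inside $\!{Loc}$: it tests epi-ness against sublocales $B \subseteq Y$, which become \emph{closed} after a single dissolution $\@D(Y)$ (and a closed sublocale containing a fiberwise dense image is everything), and then handles pullback-stability by re-running the argument after an arbitrary base change $Z -> Y$. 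The trade-off: your route is cleaner on the stability bookkeeping but imports \cref{thm:epi-pullback} and the localization description of $\!{\infty BorLoc}$, whereas the paper's proof is deliberately lighter --- the author notes that this lemma is the only ingredient of \cref{sec:pettis} drawn from \cref{sec:dst} and that it ``uses only the standard notion of dissolution locale''. Your identification of the crux is exactly right: plain density is useless against non-open sets, and the step that carries the argument is that pullbacks of dissolution maps remain $\infty$-Borel isomorphisms because $\!{Loc} -> \!{\infty BorLoc}$ preserves pullbacks and inverts dissolutions.
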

\begin{proof}
$\Longleftarrow$ is clear.
Conversely, if $p$ is a pullback-stable epimorphism, then for any sublocale $B \subseteq Y$ such that $f^*(p^*(B)) = A$, after pulling back to the dissolution $\@D(Y) -> Y$, $B$ becomes closed, whence so does $p^*(B)$, whence fiberwise density of $f$ implies that $p^*(B) = X$, whence $B = Y$ since the pullback of $p$ is an epimorphism; this shows that $p \circ f$ is an epimorphism.
Now running the same argument after first pulling back along some $Z -> Y$ shows that $p \circ f$ is pullback-stable.
\end{proof}

\begin{corollary}
If an $\infty$-Borel $f : B -> X$ has fiberwise dense image over continuous $p : X -> Y$, then $f$ and $p \circ f$ have the same $\infty$-Borel image, either existing iff the other does.
\qed
\end{corollary}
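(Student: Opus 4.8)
The plan is to prove this as the $\infty$-Borel-image-level refinement of \cref{thm:dense-epi}: I would show that the families of $\infty$-Borel subsets of $Y$ through which $p$, respectively $p \circ f$, factors are literally the same family. First I would reduce to the case that $f$ is continuous, exactly as noted for the ``$\infty$-Borel versions'' of earlier results: a sufficiently high dissolution $\@D^\alpha(B) \to B$ presents $f$ as a continuous map precomposed with (the inverse of) this dissolution, and since a dissolution map is $\infty$-Borel surjective (being invertible in $\!{\infty BorLoc}$) and has fiberwise dense image over $p$ (being dense, and remaining so under pullback), this precomposition alters neither the hypothesis nor any of the $\infty$-Borel images in sight.

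With $f$ continuous, I would recall from \cref{thm:epi-pullback} that a map is a pullback-stable epimorphism in $\!{Loc}$ exactly when its $\infty$-Borel image is its whole codomain, so that \cref{thm:dense-epi} already gives: $Y$ is the $\infty$-Borel image of $p$ iff of $p \circ f$. To upgrade this from the codomain to an arbitrary candidate, I would fix an $\infty$-Borel set $T \subseteq Y$ and argue that $(p \circ f)^*(T) = \top_B$ iff $p^*(T) = \top_X$. One direction is immediate. For the other, note that $(p \circ f)^*(T) = \top_B$ says $f$ factors through the $\infty$-Borel set $p^*(T) \subseteq X$; I would then pull the whole configuration back along a dissolution $\@D^\beta(Y) \to Y$ with $\beta$ chosen large enough that both $T$ and $\neg T$ become open, so that the pullback of $p^*(T)$ becomes clopen there. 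By pullback-stability of fiberwise density, $f$ still has fiberwise dense image over the pulled-back structure map, and still factors through this clopen set, so its open complement pulls back to $\emptyset$ under $f$ and hence is $\emptyset$ by density; thus $p^*(T)$ pulls back to $\top$, and since the pullback of a dissolution is again $\infty$-Borel surjective, it reflects $\top$, so this descends to $p^*(T) = \top_X$.

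From the resulting coincidence of $\{T \in \@B_\infty(Y) : (p \circ f)^*(T) = \top_B\}$ with $\{T \in \@B_\infty(Y) : p^*(T) = \top_X\}$ the conclusion would follow: each family is closed under the set-indexed meets of $\@B_\infty(Y)$, so one has a least element iff the other does, and the two least elements then agree --- and these are, by definition, the $\infty$-Borel images of $p \circ f$ and of $p$ (nothing further need be checked, an $\infty$-Borel image being automatically pullback-stable once it exists). If moreover the $\infty$-Borel image $S \subseteq X$ of $f$ itself exists, then $S$ is fiberwise dense over $p$ and $B \to S$ is $\infty$-Borel surjective, whence this common image is also the $\infty$-Borel image of the composite $S \hookrightarrow X \to Y$ (the second map being $p$), i.e.\ of ``$p(S)$''.

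The main obstacle I foresee is purely bookkeeping: spelling out the two ``pass to a sufficiently high dissolution'' reductions, and checking that $\top$ is reflected along pullbacks of dissolution maps --- precisely the place where one uses that a dissolution map is invertible in $\!{\infty BorLoc}$, hence a pullback-stable epimorphism, so that all of its pullbacks are $\infty$-Borel surjective (cf.\ \cref{thm:epi-pullback}).
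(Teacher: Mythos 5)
Your argument is correct and is precisely the intended one: the paper leaves this corollary as immediate from \cref{thm:dense-epi} ``by considering sufficiently high dissolutions'', and your two dissolution reductions (first making $f$ continuous, then making a candidate image $T \in \@B_\infty(Y)$ clopen so that fiberwise density forces $p^*(T) = \top$, followed by descent along the $\infty$-Borel-surjective pullback of the dissolution map) are exactly the details being elided. One remark: the conclusion is most sensibly read as asserting that $p$ and $p \circ f$ have the same $\infty$-Borel image (as your main argument establishes), since the image of $f$ itself lives in $X$ rather than $Y$ and need not exist even under the fiberwise density hypothesis; your closing observation correctly handles the pushforward reading in the case where that image does exist.
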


\section{Pettis's theorem}
\label{sec:pettis}

We recall Pettis's theorem in the following form (the version for difference sets cited in the Introduction follows by applying the property of Baire and taking $T = S^{-1}$):

\begin{theorem}[Pettis]
Let $G$ be a Baire (i.e., satisfying the Baire category theorem) topological group, let $U, V \subseteq G$ be open, and let $S \subseteq U$ and $T \subseteq V$ be dense $G_\delta$.
Then $U \cdot V = S \cdot T$.
\end{theorem}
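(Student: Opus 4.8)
The plan is to reduce the identity $U \cdot V = S \cdot T$ to a purely Baire-categorical statement about intersections of translates. The inclusion $S \cdot T \subseteq U \cdot V$ is immediate from $S \subseteq U$ and $T \subseteq V$, so all the content lies in the reverse inclusion $U \cdot V \subseteq S \cdot T$.

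First I would record the elementary reformulation that for any $g \in G$ and any sets $P, Q \subseteq G$ one has $g \in P \cdot Q$ if and only if $P \cap g Q^{-1} \ne \emptyset$ (since $g = pq$ with $p \in P$, $q \in Q$ means exactly that $p = g q^{-1} \in g Q^{-1}$ for some $q \in Q$). Taking $(P,Q)$ to be $(U,V)$ and then $(S,T)$, it suffices to prove: for every $g \in G$, if $U \cap g V^{-1} \ne \emptyset$ then $S \cap g T^{-1} \ne \emptyset$. Moreover, given $g = uv \in U \cdot V$, the element $u = g v^{-1}$ lies in $U \cap g V^{-1}$, so this open set is genuinely nonempty whenever $g \in U \cdot V$.

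Next I would isolate the Baire-category input as a short lemma: in a Baire space, if $A$ is comeager in a nonempty open set $U_0$ and $B$ is comeager in a nonempty open set $V_0$ with $U_0 \cap V_0 \ne \emptyset$, then $A \cap B \ne \emptyset$. This holds because $U_0 \cap V_0$ is a nonempty open subspace, hence itself Baire; a set meager in an ambient space stays meager upon restriction to an open subspace, and a binary (indeed countable) intersection of comeager sets is comeager; so $A \cap (U_0 \cap V_0)$ and $B \cap (U_0 \cap V_0)$ are both comeager in the nonempty Baire space $U_0 \cap V_0$, whence so is their intersection, which is in particular nonempty. To finish, I would apply this with $A = S$, $U_0 = U$, $B = g T^{-1}$, $V_0 = g V^{-1}$: since inversion and left translation are homeomorphisms of the topological group $G$, the set $g T^{-1}$ is a dense $G_\delta$ — hence comeager — in the open set $g V^{-1}$, just as $S$ is a dense $G_\delta$, hence comeager, in $U$; and $U \cap g V^{-1}$ is nonempty as observed. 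The lemma then produces some $s \in S \cap g T^{-1}$, and writing $t := s^{-1} g$ we get $t \in T$ and $g = st \in S \cdot T$.

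The only real care needed is the bookkeeping in the auxiliary lemma — tracking that ``comeager'' is inherited by open subspaces and closed under finite intersection — but these are standard facts about Baire spaces, and no feature of $G$ beyond the homeomorphy of translation and inversion enters. This is precisely the structure that the next section will transcribe into locale theory, with the auxiliary lemma replaced by Isbell's localic Baire category theorem (\cref{thm:bct}) and the translation homeomorphism by \cref{thm:action-twist}.
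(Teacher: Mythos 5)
Your proof is correct and follows essentially the same route as the paper's: reduce $g \in U\cdot V$ to nonemptiness of $U \cap gV^{-1}$, observe that $S \cap gT^{-1}$ is comeager in that nonempty open set by Baire category, and conclude it is nonempty. You merely spell out in more detail the standard facts about comeagerness in open subspaces that the paper's one-line justification of the middle implication takes for granted.
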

\begin{proof}
For any $g \in G$, we have
\begin{equation*}
g \in U \cdot V
\iff U \cap (g \cdot V^{-1}) \ne \emptyset
\iff S \cap (g \cdot T^{-1}) \ne \emptyset
\iff g \in S \cdot T,
\end{equation*}
where the middle $\Longrightarrow$ is because $S \subseteq U$ and $T \subseteq V$ are comeager, hence $S \cap (g \cdot T^{-1}) \subseteq U \cap (g \cdot V^{-1})$ is also comeager, therefore dense.
\end{proof}

In order to make this proof point-free, we need to work over all $g$ at once.
So consider the sets
\begin{alignat*}{2}
Q &:= \{(h, g) \in G \times G \mid h \in U \cap (g \cdot V^{-1})\} &&= \{(h, g) \mid h \in U \AND h^{-1} \cdot g \in V\}, \\
P &:= \{(h, g) \in G \times G \mid h \in S \cap (g \cdot T^{-1})\} &&= \{(h, g) \mid h \in S \AND h^{-1} \cdot g \in T\},
\end{alignat*}
whose projections onto the second coordinate yield $U \cdot V$ and $S \cdot T$ respectively: indeed, we have
\begin{align*}
U \cdot V
= m(U \times V)
= m((U \times G) \cap (G \times V))
&= \pi_2(\underbrace{(U \times G) \cap (\pi_1, m \circ (i \times 1_G))^*(G \times V)}_Q) \\
\intertext{using the ``twist'' isomorphism between $m$ and $\pi_2$ from \cref{thm:action-twist}.
Now note that we may rewrite this, using
$(\pi_1, m \circ (i \times 1_G))^*(G \times V)
= (m \circ (i \times 1_G))^*(V)
= (m \circ (i \times 1_G), \pi_2)^*(V \times G)$:}
&= \pi_2((U \times G) \cap (m \circ (i \times 1_G), \pi_2)^*(V \times G));
\end{align*}
moreover, this latter map $(m \circ (i \times 1_G), \pi_2) : G \times G -> G \times G$ (taking $(g, h) |-> (g^{-1}h, h)$) is \emph{also} a ``twist'' isomorphism, this time preserving $\pi_2$.
Of course, a similar description applies to $P$:
\begin{equation*}
S \cdot T
= m(S \times T)
= \pi_2(P)
= \pi_2((S \times G) \cap (m \circ (i \times 1_G), \pi_2)^*(T \times G)).
\end{equation*}
Since $S \subseteq U$ and $T \subseteq V$ are comeager, $S \times G \subseteq U \times G$ and $T \times G \subseteq V \times G$ are fiberwise comeager (over $\pi_2$), and the latter remains so after the ``twist''; hence their intersection $P \subseteq Q$ is fiberwise comeager as well, and so
\begin{equation*}
U \cdot V = \pi_2(Q) = \pi_2(P) = S \cdot T.
\end{equation*}
This argument works verbatim for localic groups (using \cref{thm:dense-epi} for the last step), yielding

\begin{theorem}[Pettis--Johnstone]
\label{thm:johnstone}
Let $G$ be a localic group, let $U, V \subseteq G$ be open, and let $S \subseteq U$ and $T \subseteq V$ be dense sublocales (e.g., the smallest such).
Then $U \cdot V = S \cdot T$, in the sense that
\begin{align*}
m : S \times T --> U \cdot V
\end{align*}
is a pullback-stable locale epimorphism.
\qed
\end{theorem}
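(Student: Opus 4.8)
The plan is to make the three-step pointwise proof of Pettis's theorem above synthetic, replacing the quantified point $g \in G$ by the generic point --- the second projection $\pi_2 : G \times G -> G$. I would first bring in the two ``twist'' automorphisms of $G \times G$: $\sigma := (\pi_1, m \circ (i \times 1_G))$, sending $(h,g) |-> (h, h^{-1}g)$, which is an instance of \cref{thm:action-twist} for left translation and satisfies $\pi_2 = m \circ \sigma$; and $\tau := (m \circ (i \times 1_G), \pi_2)$, sending $(h,g) |-> (h^{-1}g, g)$, which one checks directly to be invertible (with $\tau^{-1} : (k, g) |-> (gk^{-1}, g)$) and to satisfy $\pi_2 \circ \tau = \pi_2$. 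With these in hand, set
\[
Q := (U \times G) \cap \tau^*(V \times G) \subseteq G \times G, \qquad P := (S \times G) \cap \tau^*(T \times G) \subseteq G \times G,
\]
so that $\sigma$ restricts to isomorphisms from $Q$ onto $U \times V$ and from $P$ onto $S \times T$, carrying $\pi_2|_Q$ and $\pi_2|_P$ to $m|_{U \times V}$ and $m|_{S \times T}$. Since $m$ is open (\cref{thm:action-open}), its restriction $m|_{U \times V} : U \times V -> G$ to the open sublocale $U \times V$ is open, with pullback-stable open image $m(U \times V) = U \cdot V$; equivalently, $\pi_2|_Q : Q -> G$ is open with pullback-stable open image $U \cdot V$, so it factors as a pullback-stable epimorphism $Q ->> U \cdot V$ followed by the open inclusion $U \cdot V `-> G$.

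The second step is to check that the inclusion $P `-> Q$ has fiberwise dense image over $\pi_2|_Q : Q -> U \cdot V$. Because $S \subseteq U$ and $T \subseteq V$ are dense, the inclusions $S \times G \subseteq U \times G$ and $T \times G \subseteq V \times G$ are fiberwise dense over $\pi_2 : G \times G -> G$: this is the equivalence ``dense $\iff$ fiberwise dense over $\*1$'' from \cref{sec:loc}, taken with the extra factor $G$ and read off against $\pi_2$. Applying the frame isomorphism $\tau^*$ to the second inclusion and using $\pi_2 \circ \tau = \pi_2$, the inclusion $\tau^*(T \times G) \subseteq \tau^*(V \times G)$ is again fiberwise dense over $\pi_2$. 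Intersecting these two fiberwise dense sublocales of $G \times G$ and invoking the corollary to \cref{thm:bct} (fiberwise dense sublocales over a fixed base are closed under intersection), we get that $P \subseteq Q$ is fiberwise dense over $\pi_2 : G \times G -> G$; since density passes down to intermediate sublocales, it is fiberwise dense over the restriction $\pi_2|_Q : Q -> G$, hence over its corestriction $\pi_2|_Q : Q -> U \cdot V$ (restricting the base only discards test maps).

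Finally I would apply \cref{thm:dense-epi} with $f := (P `-> Q)$ and $p := (\pi_2|_Q : Q -> U \cdot V)$: since $f$ has fiberwise dense image over $p$ and $p$ is a pullback-stable epimorphism, so is $p \circ f = \pi_2|_P : P -> U \cdot V$; transporting along the isomorphism $\sigma|_P$ from $P$ onto $S \times T$, under which $\pi_2|_P$ becomes $m|_{S \times T}$, this says precisely that $m : S \times T -> U \cdot V$ is a pullback-stable locale epimorphism. I expect the only real friction to lie in the bookkeeping around the two twists --- keeping straight that $\tau$ fixes $\pi_2$ whereas $\sigma$ intertwines $\pi_2$ with $m$, and that these identifications match up on $P$ and $Q$ --- together with the routine check that fiberwise density over $\pi_2 : G \times G -> G$ survives the restriction and corestriction to $\pi_2|_Q : Q -> U \cdot V$; the rest is a line-by-line transcription of the pointwise argument.
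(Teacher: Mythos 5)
Your proposal is correct and is essentially the paper's own proof: you construct the same sets $Q = (U \times G) \cap \tau^*(V \times G)$ and $P = (S \times G) \cap \tau^*(T \times G)$, use the same two twist isomorphisms to identify $\pi_2|_Q$ with $m|_{U \times V}$ and $\pi_2|_P$ with $m|_{S \times T}$, and conclude via fiberwise density of $P$ in $Q$ over $\pi_2$ together with \cref{thm:dense-epi}. The only difference is expository: you spell out the corestriction $Q \twoheadrightarrow U \cdot V$ and the restriction/corestriction bookkeeping for fiberwise density more explicitly than the paper does.
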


We have written the above proof so that (once interpreted in the localic context) it is nearly identical to that of \cite[3.1]{Jgpd}.
The only differences are that we consider dense sublocales of two arbitrary open $U, V \subseteq G$ (instead of $U = V = G$), and that \cite[3.1]{Jgpd} does not explicitly state pullback-stability.

\begin{remark}
\label{rmk:johnstone}
We have stated \cref{thm:johnstone} without reference to the $\infty$-Borel notions from \cref{sec:dst}; but the statement clearly implies more generally that for two comeager $\infty$-Borel $S \subseteq U$ and $T \subseteq V$ (in the sense of \cref{sec:dst}), we have $U \cdot V = S \cdot T$ in the sense of an $\infty$-Borel image.
\end{remark}

\section{Consequences for group homomorphisms}
\label{sec:homom}

As mentioned in the Introduction, Johnstone \cite{Jgpd} used \cref{thm:johnstone} to show the following result of Isbell--Kříž--Pultr--Rosický \cite[4.7]{IKPRgrp}:

\begin{corollary}[closed subgroup theorem]
\label{thm:closed}
Every localic subgroup of a localic group is closed.
\end{corollary}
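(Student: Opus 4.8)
The plan is to deduce the closed subgroup theorem from the Pettis--Johnstone theorem (\cref{thm:johnstone}) together with Isbell's Baire category theorem (\cref{thm:bct}) and the fact (\cref{thm:closure}) that the closure of an $\infty$-Borel subgroup is again a subgroup. Let $H \subseteq G$ be a localic subgroup, and let $\-H$ denote its closure. By \cref{thm:closure}, $\-H$ is a localic group in its own right, and replacing $G$ by $\-H$ we may assume that $H$ is \emph{dense} in $G$; the goal is then to show $H = G$, i.e., that the quotient frame map $\@O(G) ->> \@O(H)$ is an isomorphism.

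The key observation is that a dense subgroup $H \subseteq G$ ought to contain the smallest dense sublocale $\Comgr(G)$ of \cref{thm:bct}: indeed, $H$ being a sublocale is an $\infty\Pi^0_2$-set, and a dense one, so $\Comgr(G) \subseteq H$. Hence $H$ is itself comeager in $G$. Now apply \cref{thm:johnstone} with $U = V = G$ and $S = T = H$: since $H$ is a dense sublocale of $G$, the multiplication map $m : H \times H -> G \cdot G = G$ is a pullback-stable epimorphism of locales. But $H$ is closed under multiplication, so $m$ restricted to $H \times H$ factors through the sublocale $H \subseteq G$; that is, we have a commuting triangle in which $H \times H -> H `-> G$ equals the pullback-stable epi $m : H \times H ->> G$. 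Since the composite is a (pullback-stable, hence ordinary) epimorphism in $\!{Loc}$ and the second leg $H `-> G$ is a (regular) monomorphism, $H `-> G$ must be an isomorphism, i.e., $H = G$.

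I would expect the main subtlety to be the bookkeeping around which flavor of epimorphism/image is in play: \cref{thm:johnstone} delivers a \emph{pullback-stable} epimorphism, and one must be slightly careful that an epimorphism in $\!{Loc}$ (pullback-stable or not) through which a regular monomorphism factors forces that monomorphism to be iso — but this is immediate since a monic epimorphism in any category with the relevant factorization is invertible, and sublocale inclusions are regular monos. A secondary point to get right is the reduction step: one should check that $\-H$, with the group structure from \cref{thm:closure}, really does make $H \subseteq \-H$ a sublocale \emph{group} inclusion with $H$ dense, so that the hypotheses of \cref{thm:johnstone} apply inside $\-H$; this is routine given that $\-H$ is closed in $G$ and $H$ dense in $\-H$ by construction. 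Everything else is a direct invocation of the cited results, so no long calculation is needed.
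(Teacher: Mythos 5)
Your proposal is correct and is essentially the paper's own argument: reduce to the case of $H$ dense in its closure (a localic group by \cref{thm:closure}), then apply \cref{thm:johnstone} with $U=V=\overline{H}$ and $S=T=H$ to conclude $\overline{H}=\overline{H}\cdot\overline{H}=H\cdot H=H$, the last step being exactly your observation that the pullback-stable epi $m:H\times H\to\overline{H}$ factors through the regular mono $H\hookrightarrow\overline{H}$. The detour through $\Comgr(G)$ is unnecessary (the theorem applies to any dense sublocale, not just the smallest), but harmless.
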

\begin{proof}
If $G$ is a localic group, and $H \subseteq G$ is a localic subgroup, then $H$ is a dense sublocale of its closure $\-H$ which is also a localic subgroup
(\cref{thm:closure}), whence $\-H = \-H \cdot \-H = H \cdot H = H$.
\end{proof}

We now derive the localic analogs of some other classical results:

\begin{theorem}[automatic continuity]
\label{thm:cts}
Every $\infty$-Borel group homomorphism $f : G -> H$ between localic groups is continuous.
\end{theorem}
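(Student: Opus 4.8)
The plan is to mimic the classical automatic continuity argument for Baire topological groups, which runs as follows: given a homomorphism $f : G \to H$ and an open $V \subseteq H$ containing $e_H$, one picks an open $W \subseteq H$ with $e_H \in W$ and $W \cdot W^{-1} \subseteq V$; then $f^{-1}(W)$ is one of countably-many-translates covering... wait, we have no countability, so instead one uses that $G$ is Baire and $\{g \cdot f^{-1}(W) : \ldots\}$-style covers. The cleaner route, and the one I expect the paper to take, is: the preimage $f^*(W) \in \@B_\infty(G)$ is an $\infty$-Borel set, hence has the property of Baire, so it differs from some open $O \in \@O(G)$ by a meager set; since $f^*(W)$ together with enough translates covers $G$ (because $W$ together with translates covers $H$, using $\@L_W$ or $\@R_W$ from \eqref{eq:grp-linv}), $f^*(W)$ cannot be meager, so $O \ne \emptyset$, i.e., $f^*(W)$ is non-meager with the property of Baire.

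Now apply the Pettis--Johnstone theorem (\cref{thm:johnstone}, in the $\infty$-Borel form of \cref{rmk:johnstone}). Let $B := f^*(W)$, which is non-meager with the property of Baire: so $B$ contains a comeager $\infty$-Borel subset $S$ of some nonempty open $O \subseteq G$. Then $O \cdot O = S \cdot S \subseteq B \cdot B$ as $\infty$-Borel images. But $B \cdot B = f^*(W) \cdot f^*(W) \subseteq f^*(W \cdot W)$, since $f$ is a homomorphism (this needs $m_G^*$ and $m_H^*$ to intertwine with $f^*$ on $\infty$-Borel sets, which holds because $f$ is $\infty$-Borel and the group operations are continuous — compatibility of $f^* \times f^*$ with the frame tensor). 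Translating $O$ by a suitable point (or rather, arguing point-freely: pick $g \in O$ formally, or better, use that $g^{-1} O \cdot O$ is an identity neighborhood after translation, invoking \cref{thm:action-open} to keep everything open and pullback-stable), we obtain an open identity neighborhood $O' \subseteq G$ with $f(O') \subseteq V$, i.e., $O' \subseteq f^*(V)$ — wait, more precisely $f^*(V) \supseteq O'$ is open. Since $W, V$ range over identity neighborhoods and these generate $\@O(H)$ by translation (regularity/homogeneity of $H$, via \eqref{eq:grp-unit}), and $f$ commutes with translation, every $f^*(V')$ for $V' \in \@O(H)$ is then seen to be open in $G$: translate $V'$ to contain $e_H$ on the relevant piece, pull back, translate back in $G$ using that $f^*$ commutes with the translation action (\cref{thm:action-twist}, \cref{thm:action-open}). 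Hence $f^* : \@O(H) \to \@B_\infty(G)$ lands in $\@O(G)$, so $f^*$ is a frame homomorphism, i.e., $f$ is continuous.

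The main obstacle I anticipate is the point-free bookkeeping around "translating $O$ to an identity neighborhood": classically one writes $g^{-1} O$ for a point $g \in O$, but here $O$ need not have a global point. The fix is to work over $O$ itself: view the inclusion $O \hookrightarrow G$ and use the action map to replace a single translate by the family $\{g^{-1} O : g \in O\}$ realized as an open cover coming from $\@L$ or $\@R$, or equivalently to argue that $O^{-1} \cdot O$ (or $O \cdot O^{-1}$) is an identity neighborhood contained in $f^*(W \cdot W^{-1}) \subseteq f^*(V)$, using \eqref{eq:grp-rinv}-style identities together with \cref{thm:action-borel-open} to guarantee all these images exist and are open and pullback-stable. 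The second delicate point is verifying $f^*(W) \cdot f^*(W) \subseteq f^*(W \cdot W)$ as $\infty$-Borel images — this should follow formally from $f$ being an $\infty$-Borel group homomorphism, i.e., from the fact that the square relating $m_G, m_H, f \times f, f$ commutes in $\!{\infty BorLoc}$, combined with the universal property defining $\infty$-Borel images — but one must be careful that the relevant images exist, which is exactly what \cref{thm:action-borel-open} provides for $f^*(W) \cdot (\text{open})$ and, after another dissolution, for $f^*(W) \cdot f^*(W)$.
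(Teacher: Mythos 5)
Your skeleton is the paper's: find a non-meager set with the property of Baire inside the preimage of an identity neighborhood, apply Pettis--Johnstone (\cref{rmk:johnstone}) to its comeager part inside a nonempty open $O$ to get $e_G \in O\cdot O^{-1} \subseteq f^*(\text{bigger neighborhood})$, then globalize via \eqref{eq:grp-unit} and \cref{thm:action-borel-open}. But the step that launches the argument fails as justified. You fix one $W \ni e_H$ with $W\cdot W^{-1}\subseteq V$ and claim \emph{that} $f^*(W)$ is non-meager ``because $W$ together with translates covers $H$'': point-freely there are no points to translate by, and even classically this needs a countable subcover (separability of $H$) or Banach's category theorem. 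The repair is the one the paper uses: pull back the \emph{open cover} $\@R_U = \{V' : V'\cdot V^{\prime -1}\subseteq U\}$ of $H$ from \eqref{eq:grp-rinv} to get $G = \bigcup_{V'} f^*(V')$ in $\@B_\infty(G)$; since an arbitrary join of sets disjoint from $\Comgr(G)$ is still disjoint from $\Comgr(G)$, \emph{some} $f^*(V')$ --- not your prechosen one --- is non-meager. Running your Pettis step with that $V'$ gives $e_G \in O\cdot O^{-1} = (O\cap f^*(V'))\cdot(O\cap f^*(V'))^{-1} \subseteq f^*(V'\cdot V^{\prime -1}) \subseteq f^*(U)$, so the conclusion is that $f^*(U)$ (one level up from where you place it) is a neighborhood of $e_G$; same destination, but the bookkeeping is not optional. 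The same objection applies to your globalization step ``translate $V'$ to contain $e_H$, pull back, translate back in $G$'': the point-free replacement is the sandwich $f^*(V) = f^*(V)\cdot e_G \subseteq f^*(V)\cdot\~W \subseteq f^*(V\cdot W) \subseteq f^*(U)$ over all $V,W$ as in \eqref{eq:grp-unit}, with the middle term open by \cref{thm:action-borel-open}; this is presumably what you intend by citing \eqref{eq:grp-unit}, but it needs to be said this way.

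A smaller but genuine error: the product $f^*(W)\cdot f^*(W)$ of two $\infty$-Borel sets need not exist as an $\infty$-Borel image, and ``another dissolution'' does not produce it --- \cref{thm:action-borel-open} dissolves the acting factor only, and the action does not lift when you also dissolve the acted-on factor (cf.\ \cite[4.4.3]{Cborloc} for images that simply fail to exist). The paper sidesteps this by never forming that image: the needed containment is checked at the level of $m^*$, namely $(O\cap f^*(V'))\times(O\cap f^*(V'))^{-1} \subseteq f^*(V')\times f^*(V^{\prime-1}) \subseteq (f\times f)^*(m^*(V'\cdot V^{\prime-1})) = m^*(f^*(V'\cdot V^{\prime-1}))$, using only that $f$ is an $\infty$-Borel homomorphism and that $V'\times V^{\prime-1}\subseteq m^*(V'\cdot V^{\prime-1})$. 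Your instinct that the containment ``follows formally from the commuting square and the universal property of images'' is right; carry it out on $m^*$ rather than on images whose existence you cannot guarantee.
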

\begin{proof}
First, we show that for every $U \in \@O(H)$ containing the identity element $e_H$, the $\infty$-Borel set $f^*(U) \in \@B_\infty(G)$ is a neighborhood of $e_G$, i.e., contains an open set containing $e_G$.
From \eqref{eq:grp-rinv},
\begin{align*}
G &= \bigcup \{f^*(V) \mid V \in \@O(H) \AND V \cdot V^{-1} \subseteq U\}.
\end{align*}
By the Baire category theorem \ref{thm:bct}, some $f^*(V)$ must be non-meager, hence by the property of Baire (see \cref{sec:dst}), there is $\emptyset \ne W \in \@O(G)$ such that $W \cap f^*(V) \subseteq W$ is comeager.
Since $W \ne \emptyset$, we have $e_G \in W \cdot W^{-1}$ again by \eqref{eq:grp-rinv}.
By \cref{thm:johnstone} (or rather, \cref{rmk:johnstone}), morally speaking, we now have
\begin{equation*}
e_G \in W \cdot W^{-1}
= (W \cap f^*(V)) \cdot (W \cap f^*(V))^{-1}
\subseteq f^*(V) \cdot f^*(V)^{-1}
\subseteq f^*(V \cdot V^{-1})
\subseteq f^*(U).
\end{equation*}
The only wrinkle is that the $\infty$-Borel image $f^*(V) \cdot f^*(V)^{-1} \subseteq G$ may not exist.
It is possible to work around this issue using the more general notion of ``$\infty$-analytic set'' (the result of formally adjoining missing images to $\!{\infty BorLoc}$) from \cite{Cborloc}.
If we unravel the resulting argument, we get the following, which stays in the $\infty$-Borel realm: to say that
\begin{equation*}
(W \cap f^*(V)) \cdot (W \cap f^*(V))^{-1} \subseteq f^*(V \cdot V^{-1})
\end{equation*}
means that the multiplication $m : G \times G -> G$ obeys
\begin{equation*}
(W \cap f^*(V)) \times (W \cap f^*(V))^{-1} \subseteq m^*(f^*(V \cdot V^{-1}));
\end{equation*}
but the left-hand side is contained in $f^*(V) \times f^*(V^{-1})$ (using that $f$ preserves inverse), which is contained in $m^*(f^*(V \cdot V^{-1}))$ by pullback-stability of the image $V \cdot V^{-1}$ (along the $\infty$-Borel $f$).

To complete the proof, let $U \in \@O(H)$; we must show that $f^*(U) \subseteq G$ is open.
From \eqref{eq:grp-unit},
\begin{align*}
f^*(U)
&= \bigcup \{f^*(V) \mid V \in \@O(H) \AND \exists e_H \in W \in \@O(H)\, (V \cdot W \subseteq U)\}.
\end{align*}
For each such $V, W \in \@O(H)$ with $e_H \in W$ and $V \cdot W \subseteq U$, we have shown above that $f^*(W)$ contains an open $\~W$ containing $e_G$, whence
\begin{equation*}
f^*(V)
= f^*(V) \cdot e_G
\subseteq f^*(V) \cdot \~W
\subseteq f^*(V \cdot W)
\end{equation*}
where the middle set $f^*(V) \cdot \~W$ is open by \cref{thm:action-borel-open}
(and the last $\subseteq$ is by pullback-stability of $V \cdot W$, similarly to above).
But the unions of the left and right sets, over all such $V, W$, are both $f^*(U)$, which is hence open.
\end{proof}

\begin{corollary}
Every localic group monomorphism $f : G -> H$ which is also a pullback-stable epimorphism of locales (i.e., an $\infty$-Borel surjection) is a localic group isomorphism.
\end{corollary}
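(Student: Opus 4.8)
The plan is to produce a two-sided inverse to $f$ in the category of localic groups, by exploiting the fact that $f$ is a monomorphism (hence an embedding of a sublocale, once we know enough) and a pullback-stable epimorphism. First I would invoke the closed subgroup theorem, \cref{thm:closed}, applied not to $G$ inside $H$ but to a diagonal-type subgroup of $H \times H$: since $f$ is a localic group monomorphism, the equalizer considerations show that $f$ realizes $G$ as a localic subgroup of $H$, so by \cref{thm:closed} the image of $f$ is a \emph{closed} sublocale of $H$. On the other hand, $f$ being a pullback-stable epimorphism of locales means by \cref{thm:epi-pullback} that $H$ is the $\infty$-Borel image of $f$; in particular the image sublocale of $f$ (the smallest sublocale through which $f$ factors) is all of $H$, i.e. $f$ has dense image in the strong sense. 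A closed sublocale that is also dense must be the whole locale, so $f$ is a (regular mono) surjection of locales, hence an isomorphism of the underlying locales.

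Alternatively — and this is the route I would actually write up, since it avoids re-deriving that a localic group mono is an embedding — I would argue directly from \cref{thm:johnstone}. Being a monomorphism of localic groups, $f$ has trivial kernel, and being a pullback-stable epimorphism it is $\infty$-Borel surjective by \cref{thm:epi-pullback}. Now apply \cref{thm:cts} to deduce that, once we have the set-map inverse, it is continuous. So the real content is: an $\infty$-Borel-surjective localic group monomorphism is bijective as a map of localic groups. For surjectivity as a locale map in the strong (pullback-stable) sense, that is exactly the hypothesis; for injectivity, monomorphism in the category of localic groups means $f^* : \@O(H) \to \@O(G)$ is such that the two projections $G \times_H G \rightrightarrows G$ agree, which combined with the group structure forces $G \times_H G = G$ via the usual translation trick (\cref{thm:action-twist}), so $f$ is a monomorphism of locales. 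A locale map that is both a pullback-stable epimorphism and a monomorphism is an isomorphism of locales (this is a standard fact; pullback-stable epi + mono $\Rightarrow$ iso in any category with the relevant limits, and here it can be seen concretely: $f^*$ is then both injective, being split by the image, and has a section, hence is an iso of frames). Thus $f$ is an isomorphism of the underlying locales, and since $f$ is a group homomorphism its inverse is automatically a group homomorphism, so $f$ is a localic group isomorphism.

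The step I expect to be the main obstacle is the passage ``localic group monomorphism $\Rightarrow$ monomorphism of underlying locales'': a monomorphism in $\mathsf{Grp}(\mathsf{Loc})$ need not, a priori, be a monomorphism of locales, and the argument that it is relies on the standard fact (used also in the proof of \cref{thm:closed}) that the kernel pair of $f$ is isomorphic over $G$ to a product, so its triviality as a localic subgroup forces $f$ to be monic. I would make this explicit: the kernel pair $G\times_H G$ carries an action making it isomorphic, via \cref{thm:action-twist} with $f$ the map in question, to $N \times G$ for $N = \ker f$; monomorphism-in-$\mathsf{Grp}$ forces $N = \*1$; hence $G \times_H G \cong G$, i.e. $f$ is monic in $\!{Loc}$. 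Once this is in hand the rest is a routine assembly of \cref{thm:epi-pullback}, the mono+pullback-stable-epi $\Rightarrow$ iso principle, and the observation that the locale inverse of a group homomorphism is again a homomorphism; \cref{thm:cts} is not even strictly needed here but could be cited to streamline the continuity of the inverse if one preferred to construct it first as an $\infty$-Borel map.
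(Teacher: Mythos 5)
Your proposal has a genuine gap at its pivotal step. In your preferred (second) route, everything rests on the claim that a locale map which is both a monomorphism and a pullback-stable epimorphism is an isomorphism, which you call ``a standard fact\dots in any category with the relevant limits.'' It is not: in the category of topological spaces every continuous surjection is a pullback-stable epimorphism and every continuous injection is a monomorphism, so any continuous bijection that fails to be a homeomorphism (e.g., $[0,1) \to S^1$) is a counterexample. (The standard facts are for \emph{extremal} or \emph{regular} epis; pullback-stable epis need not be extremal.) Your ``concrete'' justification --- that $f^*$ ``has a section'' --- is precisely the hard part, and it is where the paper's proof does its real work: monicity of $f$ in $\!{LocGrp}$ gives (via your kernel-pair argument) monicity in $\!{Loc}$, hence in $\!{\infty BorLoc}$ since the forgetful functor preserves limits, so $f^* : \@B_\infty(H) \to \@B_\infty(G)$ is an \emph{epimorphism of complete Boolean algebras}; LaGrange's theorem \cite{Lamalg} (the localic Lusin--Suslin theorem, \cite[3.4.25]{Cborloc}) --- a genuinely nontrivial result --- says such epimorphisms are surjective, and combined with the injectivity of $f^*$ coming from $\infty$-Borel surjectivity this yields an \emph{$\infty$-Borel} inverse. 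That inverse is a group homomorphism but a priori only $\infty$-Borel; its continuity is exactly what \cref{thm:cts} supplies. So your closing remark that \cref{thm:cts} is ``not even strictly needed'' has things backwards: automatic continuity is the second essential ingredient, and the group structure enters indispensably through it (whatever the status of ``mono $+$ pullback-stable epi $\Rightarrow$ iso'' for bare locales, your argument does not establish it).

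Your first route is also broken, for a reason the paper itself flags: a monomorphism of localic groups need not present $G$ as an embedded localic subgroup of $H$ --- e.g., $\#Q$ discrete $\to \#R$ is monic in $\!{LocGrp}$ but its domain is not a sublocale of $\#R$ --- so \cref{thm:closed} does not apply to ``the image of $f$'' as you intend; and even granting that the image sublocale is all of $H$, that only says $f$ factors through no proper sublocale, not that $f$ is an isomorphism. By contrast, the step you single out as the main obstacle --- that monicity in $\!{LocGrp}$ forces a trivial kernel, hence by \cref{thm:action-twist} a trivial kernel pair, hence monicity in $\!{Loc}$ --- is correct and worth recording, but it is not where the difficulty lies.
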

\begin{proof}
By the Lusin--Suslin theorem for locales (i.e., LaGrange's theorem \cite{Lamalg} that epimorphisms of complete Boolean algebras are surjective; see \cite[3.4.25]{Cborloc} for an explanation of its descriptive set-theoretic meaning), $f$ has an $\infty$-Borel inverse, which is automatically continuous.
\end{proof}

Now consider an arbitrary localic group homomorphism $f : G -> H$.
By regularity of $H$ (or by the closed subgroup theorem \ref{thm:closed}), the singleton $\{e_H\} \subseteq H$ is closed, whence so is its preimage
\begin{equation*}
\ker(f) = f^*(e_H) \subseteq G,
\end{equation*}
as is the kernel pair of $f$
\begin{equation*}
{\sim_f} = (m \circ (i \times 1_G))^*(\ker(f)) \subseteq G^2.
\end{equation*}
Moreover, the projections $\pi_1, \pi_2 : G^2 \rightrightarrows G$ restricted to $\sim_f$ are open maps; indeed, for a basic open rectangle $U \times V \subseteq G^2$ where $U, V \in \@O(G)$, we have
\begin{align*}
\pi_2({\sim_f} \cap (U \times V))
&= \pi_2((\pi_1, m \circ (i \times 1_G))^*(G \times \ker(f)) \cap (U \times V)) \\
&= m((G \times \ker(f)) \cap (\pi_1, m)^*(U \times V)) \qquad\text{by \cref{thm:action-twist}} \\
&= m((U \times \ker(f)) \cap m^*(V))
\end{align*}
which is the open set $(U \cdot \ker(f)) \cap V \subseteq G$ by \cref{thm:action-open} and pullback-stability along $V `-> G$.
It now follows from the descent theory of open surjections (see \cite[C5.1.4, C5.1.9]{Jeleph}) that the coequalizer of ${\sim_f} \rightrightarrows G$ is an open surjection with kernel pair $\sim_f$, through which $f$ factors as a monomorphism (because its kernel pair must pull back to $\sim_f$), yielding a factorization
\begin{equation*}
f : G ->> G/\ker(f) `-> H
\end{equation*}
of an arbitrary localic group homomorphism as an open surjection followed by a monomorphism; and $G/\ker(f)$ is easily seen to inherit a localic group structure from $G$ (using that $\sim_f$ is a group congruence), making both maps in this factorization group homomorphisms.

\begin{corollary}[open mapping theorem]
\label{thm:open}
Every localic group homomorphism $f : G -> H$ which is a pullback-stable epimorphism of locales (i.e., an $\infty$-Borel surjection) is open.
\end{corollary}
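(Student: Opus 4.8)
The plan is to use the canonical factorization of $f$, constructed in the paragraph preceding the statement, as an open surjection $q : G -> G/\ker(f)$ followed by a localic group monomorphism $j : G/\ker(f) -> H$, and to show that $j$ is an isomorphism; then $f = j \circ q$ is isomorphic to the open map $q$ and hence itself open (openness being stable under post-composition with isomorphisms).

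The key step is the right-cancellation property of pullback-stable epimorphisms: if $g \circ h$ is a pullback-stable epimorphism, then so is $g$. That $g$ is an epimorphism is immediate; for pullback-stability one pulls $g$ back along an arbitrary map $k$ with the same codomain as $g$, notes that the corresponding pullback of $g \circ h$ is an epimorphism and, by the pasting lemma for pullbacks, factors through the pullback of $g$ (which is therefore an epimorphism too), and observes that the same reasoning iterates. Applying this to $f = j \circ q$, which is a pullback-stable epimorphism by hypothesis, I obtain that $j$ is a pullback-stable epimorphism as well.

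Now $j$ is a localic group monomorphism that is a pullback-stable epimorphism of locales, i.e., an $\infty$-Borel surjection, so by the corollary to \cref{thm:cts} above (every such map is a localic group isomorphism) $j$ is an isomorphism. Hence $f = j \circ q$ is isomorphic to the open surjection $q$, and is therefore open. I do not expect a genuine obstacle here: the argument is essentially the assembly of results already established, the only point meriting a word of justification being the right-cancellability of pullback-stable epimorphisms; one might alternatively spell out, via \cref{thm:johnstone}, a more hands-on proof that $f$ carries opens to opens, but the factorization route seems cleaner and fits the structure already set up.
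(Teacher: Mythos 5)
Your proof is correct and is essentially the paper's own argument: the paper likewise factors $f$ through $G/\ker(f)$ as an open surjection followed by a monomorphism, observes that the monic part inherits the pullback-stable epimorphism property, and invokes the preceding corollary to conclude it is an isomorphism. Your explicit verification of right-cancellability of pullback-stable epimorphisms (via the pasting lemma) is a correct filling-in of a step the paper leaves implicit.
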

\begin{proof}
The monic part of the above factorization is also a pullback-stable epimorphism of locales, hence an isomorphism by the preceding corollary.
\end{proof}

\begin{remark}
The above factorization shows that the category $\!{LocGrp}$ of localic groups is regular.
It is not Barr-exact, however, since a localic group may have a non-embedded normal subgroup (with a ``finer topology'', e.g., $\#Q `-> \#R$ where $\#Q$ is discrete), which corresponds to an internal congruence in $\!{LocGrp}$ which is not a kernel pair.
The exact completion of $\!{LocGrp}$ has recently been considered, in the second-countable case (see \cref{rmk:polish}), by Bergfalk--Lupini--Panagiotopoulos \cite{BLPhom}, as a suitable context for ``definable'' homological algebra.
\end{remark}

\begin{remark}
In contrast to \cref{thm:open}, of course, a pullback-stable epimorphism of locales (e.g., any epimorphism between compact Hausdorff locales; see \cite[C3.2]{Jeleph}) need not be open.
Likewise, there are continuous locale maps which do not have a pullback-stable epi--mono factorization, i.e., $\infty$-Borel image (again see \cite[4.4.3]{Cborloc}), let alone a regular factorization.
Thus the category $\!{LocGrp}$ is much better behaved than $\!{Loc}$.
\end{remark}

We end this section by briefly discussing the precise connection between the above results and their classical analogs.
Recall that a \defn{Polish group} is a second-countable topological group which is completely metrizable, or equivalently, complete in its two-sided uniformity (see \cite[\S2.2]{Gidst}).

\begin{remark}
\label{rmk:polish}
Polish groups form a category equivalent to second-countable localic groups.
\end{remark}
\begin{proof}[Proof sketch]
On the one hand, because a Polish group $G$ is completely metrizable, its localic product $G \times G$ agrees with the topological product, whence the group operations pass to the underlying locale of $G$ (see e.g., \cite[6.1]{IKPRgrp}).
On the other hand, by Banaschewski--Vermeulen \cite{BVgrp}, every localic group $G$ is complete in its two-sided uniformity; if $G$ is moreover first-countable, i.e., the identity $e \in G$ has a countable neighborhood basis, then the two-sided uniformity is countably generated, hence metrizable, whence $G$ is spatial, hence is (isomorphic to the underlying localic group of) a two-sided complete topological group.
(See \cite[X~2.2]{PPloc} or \cite[3.2]{Iloc} for details; briefly, metrizability is by a point-free adaptation of the Birkhoff--Kakutani metrization theorem for first-countable topological groups \cite[2.1.1]{Gidst}, while spatiality is by intersecting nested sequences of closed sets of vanishing diameter in the usual manner.)
\end{proof}

The closed subgroup theorem \ref{thm:closed} thus generalizes the classical fact \cite[2.2.1]{Gidst} that every Polish (i.e., $G_\delta$) subgroup $H \subseteq G$ of a Polish group is closed.
The classical fact is more easily proved by observing that $H$ cannot be disjoint from any of its cosets in $\-H$, by Baire category; but it can also be proved via Pettis's theorem, exactly as in \cref{thm:closed}, which is needed in the point-free context because there need not be any single non-identity coset of $H$ (if $G/H$ is non-spatial).

\Cref{thm:cts} generalizes the classical automatic continuity of Borel homomorphisms between Polish groups, while \cref{thm:open} generalizes the classical open mapping theorem for surjective homomorphisms between Polish groups \cite[2.3.3]{Gidst} (there are of course also classical versions for Fréchet vector spaces).
The former result is indeed a generalization, because for a Polish group $G$, the classical Borel $\sigma$-algebra $\@B(G)$ is a Boolean $\sigma$-subalgebra of the $\infty$-Borel algebra $\@B_\infty(G)$ as defined in \cref{sec:dst} (see e.g., \cite[3.5.6]{Cborloc}); thus $\infty$-Borel group homomorphisms between Polish groups include all classical Borel group homomorphisms.
For the open mapping theorem, the additional fact needed for the correspondence with the classical result is that open maps between Hausdorff spaces in the spatial and localic senses agree (see e.g., \cite[7.2.1(2)]{PPloc}).
Both results for localic groups are proved via direct point-free translations of the classical proofs in \cite[2.3.3]{Gidst}.

\section{Coproducts of complete Boolean algebras}
\label{sec:cbool}

The classical Gaifman--Hales theorem \cite{Gcbool}, \cite{Hcbool} (see also \cite{Scbool}) asserts that the free complete Boolean algebra on countably infinitely many generators does not exist.
In other words, if we construct the free algebra syntactically in the usual way, by taking equivalence classes of complete Boolean terms built from the generating set $\#N$, the resulting algebra is \emph{large}, i.e., a proper class.
Letting $\!{CBool}$ be the category of (small) complete Boolean algebras, the free algebra on $n$ generators is the same thing as the coproduct of $n$ copies of the free algebra $\{\top, a, \neg a, \bot\}$ on one generator; thus the Gaifman--Hales theorem says that the countable coproduct of this small algebra does not exist in $\!{CBool}$, in that it lands instead in the category $\!{CBOOL}$ of possibly large algebras.

On the other hand, the free complete Boolean algebra on a finite number $n$ of generators is still finite, coinciding with the free Boolean algebra on $n$ generators, namely $\@P(\@P(n))$.
It is thus natural to ask whether, more generally, every finite coproduct of small complete Boolean algebras is still small; to our knowledge, this question has never been addressed before.
The Pettis--Johnstone theorem yields a negative answer:

\begin{theorem}
\label{thm:cbool-coprod}
There exists a small complete Boolean algebra whose coproduct with itself (in the category of possibly large complete Boolean algebras) is large.
\end{theorem}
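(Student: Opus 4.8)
The plan is to recast the statement in the language of locales --- via the correspondence between complete Boolean algebras and Boolean locales --- and then apply the Pettis--Johnstone theorem (\cref{thm:johnstone}) to the multiplication map of a judiciously chosen localic group.

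\emph{Step 1 (dictionary).} Every complete Boolean algebra is in particular a frame, and a frame homomorphism between two complete Boolean algebras automatically preserves complements (since $f(a) \vee f(\neg a) = \top$ and $f(a) \wedge f(\neg a) = \bot$ force $f(\neg a) = \neg f(a)$), hence also arbitrary meets; so $\!{CBOOL}$ is a full subcategory of the category of frames. Combining this with the universal property of $\@B_\infty$ as the free complete Boolean algebra on a frame, and with the definition $\@O(X \times Y) = \@O(X) \otimes \@O(Y)$ of the product locale, one checks that for locales $X, Y$ whose frames are complete Boolean algebras, both $\@B_\infty(X \times Y)$ and the coproduct $\@O(X) \sqcup_{\!{CBOOL}} \@O(Y)$ corepresent the functor $C \mapsto \operatorname{Hom}_{\!{CBOOL}}(\@O(X), C) \times \operatorname{Hom}_{\!{CBOOL}}(\@O(Y), C)$ on $\!{CBOOL}$, whence
\[
  \@O(X) \sqcup_{\!{CBOOL}} \@O(Y) \;\cong\; \@B_\infty(X \times Y).
\]
In particular, for any localic group $G$, the regular open algebra $B := \@O(\Comgr(G))$ of $G$ is a \emph{small} complete Boolean algebra, and $B \sqcup_{\!{CBOOL}} B \cong \@B_\infty(\Comgr(G) \times \Comgr(G))$.

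\emph{Step 2 (Pettis--Johnstone).} Taking $U = V = G$ and $S = T = \Comgr(G)$ in \cref{thm:johnstone} shows that the restriction of multiplication $m \colon \Comgr(G) \times \Comgr(G) \to G$ is a pullback-stable epimorphism of locales, i.e.\ (\cref{thm:epi-pullback}) that $G$ is its $\infty$-Borel image. Hence $m^* \colon \@B_\infty(G) \to \@B_\infty(\Comgr(G) \times \Comgr(G)) = B \sqcup_{\!{CBOOL}} B$ is injective: if $m^*(C) = \top$ then, $G$ being the smallest $\infty$-Borel subset of $G$ with full pullback to $\Comgr(G) \times \Comgr(G)$ along $m$, necessarily $C = \top$, and applying this to biconditionals of pairs of $\infty$-Borel sets gives injectivity. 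Therefore $\lvert\@B_\infty(G)\rvert \le \lvert B \sqcup_{\!{CBOOL}} B\rvert$, and it remains only to exhibit a localic group $G$ with $\@B_\infty(G)$ a proper class.

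\emph{Step 3 (choosing $G$).} I would take $G$ to be the Cantor group $2^\omega$, a compact Polish --- hence, by \cref{rmk:polish}, localic --- group. Its topology is the frame $\operatorname{Idl}(A)$ of ideals of the Boolean algebra $A$ of clopen sets (every open set being a union of clopens), and the countably many coordinate sets are clopen and generate $A$ freely as a Boolean algebra. Thus the Boolean homomorphism $A \to F$ sending these coordinate sets to a generating family of the free complete Boolean algebra $F$ on $\omega$ generators, being in particular a bounded-lattice homomorphism, extends to a frame homomorphism $\operatorname{Idl}(A) = \@O(2^\omega) \to F$ (as $\operatorname{Idl}(A)$ is the free frame on $A$), and hence to a surjection $\@B_\infty(2^\omega) \twoheadrightarrow F$ in $\!{CBOOL}$. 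By the Gaifman--Hales theorem \cite{Gcbool,Hcbool} ($F$ being the countable coproduct of $\{\top, a, \neg a, \bot\}$, which does not exist in $\!{CBool}$), $F$ is a proper class; hence so is $\@B_\infty(2^\omega)$ (cf.\ \cite[3.5.13]{Cborloc}). Taking $B = \@O(\Comgr(2^\omega))$ (namely the regular open algebra of Cantor space) completes the argument.

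The bookkeeping in Step 1 --- matching up coproducts in $\!{CBOOL}$ with $\@B_\infty$ of product locales while keeping the small/large distinction straight (the coproduct being formed in $\!{CBOOL}$, and $\@B_\infty$ of a set-sized frame always landing there even when it is a proper class) --- is the fussiest part, but I expect the genuine crux to be the use of \cref{thm:johnstone}: without the group structure, $\Comgr(G) \times \Comgr(G) \hookrightarrow G \times G$ is merely a dense sublocale inclusion, not an epimorphism, so nothing can be concluded; it is precisely Pettis--Johnstone that promotes the density of $\Comgr(G)$ to $\infty$-Borel surjectivity of $m$ on $\Comgr(G) \times \Comgr(G)$, forcing the large algebra $\@B_\infty(2^\omega)$ into $B \sqcup_{\!{CBOOL}} B$. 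The one genuinely external ingredient is that $\@B_\infty$ of a localic group can be a proper class, which is the Gaifman--Hales theorem.
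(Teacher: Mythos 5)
Your proof is correct and follows essentially the same route as the paper's: identify the coproduct $B \sqcup B$ with $\@B_\infty(\Comgr(G) \times \Comgr(G))$ via freeness, embed the large algebra $\@B_\infty(G)$ into it using the $\infty$-Borel surjectivity of $m$ on $\Comgr(G) \times \Comgr(G)$ from \cref{thm:johnstone}, and conclude by Gaifman--Hales. The only (immaterial) differences are that you take $G = 2^\omega$ rather than an arbitrary uncountable Polish group, and that you verify directly, via $\@O(2^\omega) = \operatorname{Idl}(A)$ for $A$ the free Boolean algebra on $\omega$ generators, the largeness of $\@B_\infty(2^\omega)$ that the paper cites from \cite[3.5.13]{Cborloc}.
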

\begin{proof}
Let $G$ be any uncountable Polish group (e.g., $\#R$); we claim that the regular open algebra $\@O(\Comgr(G))$ has large coproduct with itself.
Since free functors preserve coproducts, the complete Boolean coproduct in question is the free complete Boolean algebra generated by the frame coproduct $\@O(\Comgr(G)) \otimes \@O(\Comgr(G)) = \@O(\Comgr(G) \times \Comgr(G))$, i.e., the $\infty$-Borel algebra $\@B_\infty(\Comgr(G) \times \Comgr(G))$.
By \cref{thm:johnstone}, the multiplication $m : \Comgr(G) \times \Comgr(G) -> G$ is $\infty$-Borel surjective, i.e., $m^* : \@B_\infty(G) -> \@B_\infty(\Comgr(G) \times \Comgr(G))$ is injective.
But since $G$, being uncountable, is Borel isomorphic to $2^\#N$, $\@B_\infty(G)$ is isomorphic to the free complete Boolean algebra on countably many generators (see \cite[3.5.13]{Cborloc}), hence large by Gaifman--Hales.
\end{proof}

\bigskip\noindent
Department of Mathematics and Statistics \\
CRM/McGill University \\
Montréal, QC, H3A 0B9, Canada \\
Email: \nolinkurl{ruiyuan.chen@umontreal.ca}

\end{document}